\theoremstyle{plain}
\newtheorem{theorem}{Theorem}[section]
\newtheorem{lemma}[theorem]{Lemma}
\theoremstyle{definition}
\theoremstyle{remark}
\newtheorem{remark}[theorem]{Remark}
\numberwithin{equation}{section}
\newcommand{\ep}{\varepsilon}
\renewcommand{\geq}{\geqslant}
\renewcommand{\leq}{\leqslant}
\DeclareMathOperator{\curl}{\mathrm{curl}} 
\renewcommand{\div}{\operatorname{div}}
\newcommand{\bb}{\mathbf b}
\newcommand{\be}{\mathbf e}
\newcommand{\bF}{\mathbf F}
\newcommand{\bh}{\mathbf h}
\newcommand{\bI}{\mathbf I}
\newcommand{\bj}{\mathbf j}
\newcommand{\bJ}{\mathbf J}
\newcommand{\bk}{\mathbf k}
\newcommand{\bL}{\mathbf L}
\newcommand{\bn}{\mathbf n}
\newcommand{\bv}{\mathbf v}
\newcommand{\bx}{\mathbf x}
\newcommand{\by}{\mathbf y}
\newcommand{\bz}{\mathbf z}
\newcommand{\Dvl}{D^{\bf v}_{\ell}}
\newcommand{\cB}{\mathcal B}
\newcommand{\cZ}{\mathcal Z}
\newcommand{\de}{\mathrm{d}}
\newcommand{\dx}{\, \de{\bf x}}
\newcommand{\R}[1]{\mathbb{R}^{#1}}
\newcommand{\RR}{\mathbb{R}}
\renewcommand{\vec}[2]{\left(\begin{array}{c}
		  #1 \\
		  #2
		  \end{array}\right)}
\newcommand{\wtbh}{\widetilde{ {\bf h} } }
\newcommand{\mygraphic}[1]{\includegraphics[height=#1]{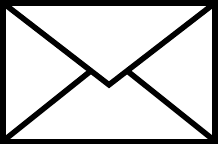}}
\newcommand{\myenv}{(\raisebox{0pt}{\mygraphic{.6em}})}
\begin{document}
\title[Renormalized Energy and Peach-K\"ohler Forces]{Renormalized Energy 
and Peach-K\"ohler Forces
for  Screw Dislocations with Antiplane Shear}

\author{Timothy Blass}
\address{University of California, Riverside, Department of Mathematics, 900 University Ave.\@ Riverside, CA 92521, USA}
\email[T.~Blass]{timothy.blass@ucr.edu} 
\author{Marco Morandotti}
\address{SISSA - International School for Advanced Studies, Via Bonomea, 265, 34136 Trieste, Italy}
\email[M.~Morandotti \myenv]{marco.morandotti@sissa.it}

\begin{abstract}
We present a variational framework for studying screw dislocations 
subject to antiplane shear.
Using a classical model developed by Cermelli \& Gurtin \cite{Gurtin}, 
methods of Calculus of Variations are exploited to prove existence
 of solutions, and to derive a useful expression of the 
Peach-K\"ohler forces acting on a system of dislocation.
This provides a setting for studying the dynamics of the dislocations,
which is done in \cite{BFLM}.
\end{abstract}

\maketitle
\keywords{\noindent {\bf Keywords:} {Dislocations, variational methods, renormalized energy, Peach-K\"ohler force.}}

\subjclass{\noindent {\bf {2010}
Mathematics Subject Classification:}
{70G75 (74B05, 49J99, 70F99).
 }}

\section{Introduction}
Dislocations are one-dimensional defects in crystalline 
materials \cite{N67}. Their modeling 
is of great interest in materials science since 
important material properties, such as rigidity and 
conductivity, can be strongly
affected by
the presence of dislocations. For example, large collections of dislocations can 
result in plastic deformations in solids under applied loads. 

In this paper we derive an expression for the renormalized energy
associated to
a system screw dislocations in
cylindrical crystalline materials using a continuum model 
introduced by Cermelli and Gurtin
\cite{Gurtin}.  
We use the renormalized energy to derive a characterization for the forces
on the dislocations, called Peach-K\"ohler forces. These forces drive
the dynamics of the system, which is studied in \cite{BFLM}. 
The proofs of some results that are used in \cite{BFLM} are contained
in this paper.

Following \cite{Gurtin}, we consider an 
elastic body $B\subset \RR^3$, 
$B := \Omega \times \RR$, where
$\Omega \subset \RR^2$ is a bounded simply
 connected open set with Lipschitz boundary. 
$B$ undergoes antiplane shear
deformations $\Phi:B\to B$  of the form
\begin{equation*}
  \Phi(x_1,x_2,x_3) := (x_1,x_2,x_3 + u(x_1,x_2)),
\end{equation*}
with $u:\Omega\to\RR$. 
The deformation gradient ${\bf F}$ is given by
\begin{equation}\label{deform_grad}
  {\bf F} := \nabla \Phi =\left(
  \begin{array}{ccc}
    1 & 0 & 0\\ 0 & 1 & 0\\ \frac{\partial}{\partial x_{1}}u & \frac\partial{\partial x_{2}}u & 1
  \end{array}
\right) = {\bf I} + {\bf e}_3{\otimes }\left(
    \begin{array}{c}
      \nabla u \\ 0
    \end{array}
\right).
\end{equation}
The assumption of antiplane shear allows us to reduce the three-dimensional problem
to a two-dimensional problem. We will consider strain fields $\bh$
that are defined on the cross-section
$\Omega$, taking values in $\RR^2$. 
In the absence of dislocations,
$\bh = \nabla u$.
If dislocations are present, then the strain field
is singular at the sites of the dislocations, and in the case of screw dislocations this will be a line singularity.

A screw dislocation is a lattice defect at the atomic scale of the
  material, and is represented at the continuum level  by a line singularity in the strain field
for the body $B$. In the antiplane shear setting, this line is parallel
to the $x_3$ axis; in the cross-section $\Omega$ 
a screw dislocation 
is represented as a point singularity.
A screw dislocation is characterized by a position $\bz \in \Omega$ and a vector
$\bb\in \RR^3$, called the \emph{Burgers} vector. The position $\bz \in \Omega$
is a point where the strain field fails to be the gradient of a smooth function, and
the Burgers vector measures the severity of this failure.  To be precise,
a strain field, ${\bf h}$, associated with a system of $N$ screw dislocations at 
positions 
\begin{equation*}\label{Z}
\cZ:=\{\bz_1,\ldots,\bz_N\}
\end{equation*} 
with corresponding Burgers vectors
\begin{equation*}\label{B}
\cB:=\{\bb_1,\ldots,\bb_N\}
\end{equation*}
 satisfies the relation
\begin{equation}
  \label{curl}
  \curl\bh=\sum_{i=1}^N b_i\delta_{\bz_i}\quad {\rm in}\,\, \Omega
\end{equation}
in the sense of distributions, with $b_i:= |{\bf b}_i|$. The notation $\curl\bh$ denotes
the scalar curl,
$\frac{\partial}{\partial x_{1}}h_2 - \frac{\partial}{\partial x_{2}}h_1$.
Thus, in the antiplane shear setting,
the Burgers vectors can be written as
$\bb_i = b_i {\bf e}_3$. The scalar $b_i$ is
called the \emph{Burgers modulus}
for the dislocation at $\bz_i$, and in view of \eqref{curl} it is 
given by
  \begin{equation*}
    b_i=\int_{\ell_i}   {\bf h} \cdot {\bf t}\,\de s,
  \end{equation*}
where $\ell_i$ is any counterclockwise loop surrounding the
dislocation point ${\bf z}_i$ and no other dislocation points, 
${\bf t}$ is the tangent to $\ell_i$,
and $\de s$ is the line element.
Since $\bb_i=b_i\be_3$ for all $i\in\{1,\ldots,N\}$, by abuse of notation
from now on we will use the symbol $\cB$ both for the set of Burgers vectors and for the set of Burgers moduli.
When dislocations are present, 
the deformation gradient $\bF$ can no longer be represented by
the last expression in \eqref{deform_grad}, which 
needs to be replaced with
\begin{equation*}\label{sing_grad}
\bF=\bI+\be_3{\otimes}\vec{\bh}{0}.
\end{equation*}

Our goal is to
derive an energy associated to systems of screw dislocation
and obtain the characterization of the Peach-K\"ohler forces on the dislocations.
This, together with the energy dissipation criterion described in \cite{Gurtin}, will lead to an evolution equation for the system of dislocations.

Our investigation of the energy associated to a system of dislocations 
will be undertaken in the context of linear elasticity for singular strains $\bh$. 
The energy density $W$ 
is given by
\begin{equation*}\label{W_def}
  W({\bf h}) := \frac12 {\bf h} \cdot \bL\bh
\end{equation*}
where the  
elasticity tensor ${\bf L}$ is
a symmetric, positive-definite matrix and, in suitable coordinates,
${\bf L}$ is written in terms of the 
Lam\'e moduli $\lambda,\mu$ of the material as
\begin{equation}\label{L}
  {\bf L} := \left(
    \begin{array}{cc}
      \mu & 0 \\ 0 & \mu \lambda^2
    \end{array}\right).
\end{equation}
We require $ \lambda, \, \mu >0$, and 
the energy is isotropic if and only if $\lambda = 1$. 
The energy of a strain field $\bh$ is given by
\begin{equation*}\label{J_def}
J(\bh):=\int_\Omega W(\bh(\bx))\, \de\bx,
\end{equation*}
and the equilibrium equation is
\begin{equation}
  \label{equilibrium}
  	\div\bL\bh=0 \quad {\rm in}\,\, \Omega.
\end{equation}
Equations \eqref{curl} and \eqref{equilibrium}
provide  a characterization of strain fields describing screw dislocation systems
in linearly elastic materials.
To be precise, we say that  a strain field $\bh \in L^2(\Omega; \R2)$ corresponds to
a {\emph {system of dislocations}} at the positions $\cZ$ with Burgers vectors $\cB$
 if $\bh$ satisfies 
\begin{equation}\label{curldiv}
        \left\{
          \begin{array}{l}
\curl\bh=\sum_{i=1}^N b_i\delta_{\bz_i}\\
	\div\bL\bh=0 
\end{array}\right.\quad {\rm in}\,\, \Omega,
\end{equation}
in the  sense of distributions.

In analogy to the theory of Ginzburg-{L}andau vortices \cite{BBH}, 
 no variational principle can be associated with \eqref{curldiv} 
because the elastic energy of a system of screw dislocations is not finite (see, e.g., \cite{Leoni, Gurtin, N67}),
therefore the study of \eqref{curldiv} cannot be undertaken directly in terms of energy minimization.
Indeed, the simultaneous requirements of finite energy and \eqref{curl} are incompatible,
since if $\curl \bh=\delta_{\bz_0}$, $\bz_0\in\Omega$, 
and if $B_\ep(\bz_0)\subset\subset\Omega$, then
\begin{equation*}
\int_{\Omega \setminus B_\ep(\bz_0)} W(\bh) \,
\de \bx = O(|\log \ep|).
\end{equation*}
In the engineering literature (see, e.g.,  \cite{Gurtin, N67}), this problem is usually overcome by regularizing the energy. 
By removing small cores of size $\ep>0$ centered at the dislocations, we 
will replace $J$ by $J_\ep$ (see \eqref{eq:Jepsilon}) 
and obtain finite-energy strains $\bh_\ep$, as minimizers of $J_\ep$. 
Letting
$\ep\to 0$ we will recover a unique limiting strain $\bh_0 = \lim_{\ep\to 0} \bh_\ep$, 
satisfying \eqref{curldiv}. 
From this, we can derive a \emph{renormalized energy} $U$ associated with
the limiting strain, see \eqref{energyep} and \eqref{U}.
The energy of a minimizing strain takes the form
\begin{equation}\label{expansion}
J_\ep (\bh_\ep) = C \log \frac1\ep + U(\bz_1,\ldots,\bz_N) + O(\ep),
\end{equation}
where the first term, $C\log (1/\ep)$, is the \emph{core energy}, and the
renormalized energy, $U$, is the physically meaningful quantity.
This type of asymptotic expansion was first proved by 
Bethuel, Brezis, and  H{\'e}lein in \cite{BBH93} for  Ginzburg-{L}andau vortices.
The case of edge dislocations was studied in \cite{Leoni}, and also using $\Gamma$-convergence techniques (see, e.g., \cite{AP14,SS03} and the references therein for Ginzburg-{L}andau vortices, \cite{DLGP12,GPPS13,GLP10}). 
Finally, it is important to mention that we ignore here the core
energy. 
We refer  to \cite{N67,TOP96,VKHLO12} for more details.

The renormalized energy $U$ is a function only of the positions $\{\bz_1,\ldots,\bz_N\}$,
and its gradient with respect to $\bz_i$ gives the negative of the Peach-K\"ohler force on $\bz_i$,
denoted $\bj_i$. 
In Theorem. \ref{thm:force} we show that
\[
\bj_i = -\nabla_{\bz_i}U=
\int_{\ell_i} \left\{ W(\bh_0){\bf I} - \bh_0 {\otimes}(\bL\bh_0) \right\}\bn \, \de s,
\]
where $\ell_i$ is a suitably chosen
loop around $\bz_i$ and $\bn$ is the outer unit normal to the set bounded by $\ell_i$ and containing $\bz_i$. 
The quantity $W(\bh_0){\bf I} - \bh_0 {\otimes}(\bL\bh_0)$
is the \emph{Eshelby stress tensor}, see \cite{Eshelby,Gurtin95}.

The expression for $\bj_i$ given below 
contains two contributions accounting for the two different kinds of forces acting on a dislocation when other
dislocations are present:
the interactions with the other dislocations and the interactions with $\partial \Omega$.
The latter balances the tractions
 of the forces generated by all the dislocations, and
it is the (rotated) gradient of the solution $u_0$
to an elliptic problem with Neumann boundary conditions \eqref{u0Neumann}.
Precisely (see \eqref{pkforce}) we show that $\bj_i$ has the form
\begin{equation*}
\bj_i(\bz_1,\ldots,\bz_N)=b_i \bJ \bL\Big[\nabla u_0(\bz_i;\bz_1,\ldots,\bz_N)
+\sum_{j\neq i} \bk_j(\bz_i;\bz_j)\Big],
\end{equation*}
where $\bJ$ is the rotation matrix of an angle $\pi/2$, and
$\bk_j(\cdot;\bz_j)$ is the fundamental singular strain generated by 
the dislocation $\bz_j$ (see \eqref{k_def}).
It is important to notice that the force on the $i$-th dislocation is 
a function of the positions \emph{all} the dislocations.
This explicit formula  
is useful for calculating $\bj_i$, and is employed in
\cite{BFLM} to 
study the motion of the dislocations.

In Section \ref{sec:regularized} we show how to 
regularize the energy to use variational techniques to study the problem.
In Section \ref{sec:renorm} we derive the renormalized energy, which
we use in Section \ref{sec:force} to derive the Peach-K\"{o}hler
force.

\section{Regularized Energies and Singular Strains}\label{sec:regularized}
Consider a system of dislocations at the positions $\cZ = \{\bz_1,\ldots,\bz_N\}$ with 
Burgers vectors $\cB=\{\bb_1,\ldots,\bb_N\}$. 
Regularize the
energy $J$ by removing the singular points from the domain $\Omega$, and define the sets 
\begin{equation}\label{Omega_ep}
\Omega_\ep:=\Omega\setminus\left(\bigcup_{i=1}^N \overline{E}_{\ep,i}\right)\quad
\mbox{for }\,\,\, \ep\in(0,\ep_0),
\end{equation}
where, for every $i\in\{1,\ldots,N\}$, $E_{\ep,i} := E_{\ep}(\bz_i)$, and 
\begin{equation*}
E_r(\bz):=
\left\{ (x_1,x_2)\in\R2: (x_1-z_1)^2 + \left(\frac{x_2-z_2}{\lambda} \right)^2 < r^2 \right\}
\end{equation*}
is an ellipse centered at ${\bf z}$ for $r>0$;
the parameter $\lambda$ is one of the the Lam\'e moduli of the material (cf. \eqref{L}).
Let $\ep_0>0$ be fixed (depending on $\Omega$, $\cZ$, and $\lambda$)
such that for all $\ep \in(0,\ep_0)$ we have $E_{\ep,i} \subset \subset\Omega$, and
$\overline{E}_{\ep,i}\cap \overline{E}_{\ep,j}=\emptyset$ for all $i\neq j$. 
(The shape of the cores $E_{\ep,i}$ is not crucial, but ellipses
 $E_{\ep,i}$ 
centered at $\bz_i$ will be convenient in the sequel.)

We define
\begin{equation}\label{eq:Jepsilon}
  J_\ep (\bh) := \int_{\Omega_\ep} W(\bh)\,\de{\bf x}.
\end{equation}
Note that by removing cores around the singular set $\cZ$, we have regularized the energy
in the sense that it will not necessarily be infinite on strains
satisfying \eqref{curldiv}. However, since we have
effectively removed the dislocations from the problem,
we account for their presence by 
a judicious choice of function space. We define
\begin{equation*}
  H^{\curl}(\Omega_\ep):=\{\bh\in L^2(\Omega_\ep,\R2):\curl\bh\in L^2(\Omega_\ep)\}
\end{equation*}
and
\begin{equation}\label{Hcurl0}
H_0^{\curl}(\Omega_\ep,\cZ,\cB):=\left\{\bh\in H^{\curl}(\Omega_\ep),\,
\curl\bh=0, \, \int_{\partial E_{\ep,i}} \bh{\cdot}{\bf t}\,\de s=b_i,\; i=1,\ldots,N \right\},
\end{equation}
where ${\bf t}$ is the unit tangent vector to $\partial E_{\ep,i}$.
The condition on $\bh$ involving the Burgers moduli $b_i$ in \eqref{Hcurl0}
reintroduces the dislocations into the regularized problem, and 
it 
prevents the minimizers of $J_\ep$ from being
gradients of $H^1$ functions.
In order to abbreviate the notation, 
we will write only $H_0^{\curl}(\Omega_\ep)$ in place of $H_0^{\curl}(\Omega_\ep,\cZ,\cB)$ whenever it is possible to do 
so without confusion.
We will denote by $\bn$ the unit outward normal to $\partial \Omega_\ep$.

The following lemma concerns the properties of minimizers of $J_\ep$, the existence of which
is proved in Lemma~\ref{lem:coercive}. See also Remark \ref{rem:exist}.
\begin{lemma}\label{lem:Euler}
Assume that $\bh_\ep$
is a minimizer of $J_\ep$ in $H_0^{\curl}(\Omega_\ep)$. 
Then it satisfies the Euler equations
  \begin{equation}
    \label{eq:Euler}
\left\{    \begin{array}{ll}
    {\rm div}({\bf L h}_\ep) = 0 & {\rm in}\,\,  \Omega_\ep\,,\\
{\bf L h}_\ep \cdot \bn=0 & {\rm on }\,\, \partial \Omega_\ep\, .
\end{array}\right.
  \end{equation}
Moreover, the solution to \eqref{eq:Euler} is unique.
\end{lemma}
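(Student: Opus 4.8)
The plan is to derive the Euler--Lagrange equations by the standard first-variation argument, being careful to identify the correct space of admissible perturbations, and then to establish uniqueness via a convexity/energy argument.

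\textbf{Step 1: Admissible variations.} Suppose $\bh_\ep$ minimizes $J_\ep$ over $H_0^{\curl}(\Omega_\ep)$. If $\bg \in H^{\curl}(\Omega_\ep)$ satisfies $\curl\bg = 0$ and $\int_{\partial E_{\ep,i}}\bg\cdot\bt\,\de s = 0$ for every $i$, then $\bh_\ep + s\bg \in H_0^{\curl}(\Omega_\ep)$ for all $s\in\RR$. Since $W$ is quadratic with $W(\bh) = \tfrac12\bh\cdot\bL\bh$, the map $s\mapsto J_\ep(\bh_\ep+s\bg)$ is a polynomial in $s$ minimized at $s=0$, so its derivative vanishes there:
\begin{equation*}
\int_{\Omega_\ep}\bL\bh_\ep\cdot\bg\,\de\bx = 0.
\end{equation*}
The key observation is that on the (topologically nontrivial) domain $\Omega_\ep$, the curl-free fields with zero circulation around each $\partial E_{\ep,i}$ are \emph{exactly} the gradients $\nabla\varphi$ with $\varphi\in H^1(\Omega_\ep)$: curl-free gives a local potential, and the vanishing of all the periods around the holes makes the potential single-valued. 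Hence the variational identity reads $\int_{\Omega_\ep}\bL\bh_\ep\cdot\nabla\varphi\,\de\bx = 0$ for all $\varphi\in H^1(\Omega_\ep)$.

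\textbf{Step 2: Extract the PDE and boundary condition.} Taking $\varphi\in C_c^\infty(\Omega_\ep)$ gives $\div(\bL\bh_\ep)=0$ in $\Omega_\ep$ in the distributional sense; in particular $\bL\bh_\ep$ has a divergence in $L^2$, so its normal trace $\bL\bh_\ep\cdot\bn$ on $\partial\Omega_\ep$ is well-defined in $H^{-1/2}$. Then an integration by parts (the generalized Gauss--Green formula for fields in $H(\div)$) against general $\varphi\in H^1(\Omega_\ep)$ yields $\langle\bL\bh_\ep\cdot\bn,\varphi\rangle_{\partial\Omega_\ep}=0$ for all such traces $\varphi|_{\partial\Omega_\ep}$, whence $\bL\bh_\ep\cdot\bn=0$ on $\partial\Omega_\ep$. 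This gives \eqref{eq:Euler}. Note $\partial\Omega_\ep = \partial\Omega \cup \bigcup_i\partial E_{\ep,i}$, so the natural boundary condition holds on the core boundaries as well as on $\partial\Omega$.

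\textbf{Step 3: Uniqueness.} If $\bh_\ep^{(1)}$ and $\bh_\ep^{(2)}$ are two minimizers, their difference $\bg:=\bh_\ep^{(1)}-\bh_\ep^{(2)}$ is curl-free with zero circulation around each core, hence $\bg=\nabla\varphi$ for some $\varphi\in H^1(\Omega_\ep)$. Using the variational identity for $\bh_\ep^{(1)}$ with test field $\bg$ and subtracting the one for $\bh_\ep^{(2)}$ gives $\int_{\Omega_\ep}\bL\bg\cdot\bg\,\de\bx = 0$. Since $\bL$ is positive-definite, $\bg\equiv 0$. (Equivalently: $J_\ep$ is strictly convex along admissible directions modulo constants, and the constraint set is affine, so the minimizer is unique.) The same computation shows any solution of \eqref{eq:Euler} lying in $H_0^{\curl}(\Omega_\ep)$ is the unique minimizer, giving the last sentence.

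\textbf{Main obstacle.} The delicate point is Step 1's characterization of admissible variations as gradients of single-valued $H^1$ functions on the multiply connected domain $\Omega_\ep$, together with the low regularity bookkeeping in Step 2 (justifying the normal trace and Gauss--Green formula for $\bL\bh_\ep\in H(\div;\Omega_\ep)$ when $\partial\Omega$ is merely Lipschitz). Everything else is routine: $W$ quadratic makes the first variation a one-line computation, and positive-definiteness of $\bL$ makes uniqueness immediate.
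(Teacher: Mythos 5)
Your proposal is correct and follows essentially the same route as the paper: compute the first variation of the quadratic functional along gradient directions $\nabla\varphi$, $\varphi\in H^1(\Omega_\ep)$, integrate by parts to read off $\div(\bL\bh_\ep)=0$ and the natural condition $\bL\bh_\ep\cdot\bn=0$ on $\partial\Omega_\ep$, and obtain uniqueness by noting that the difference of two solutions in $H_0^{\curl}(\Omega_\ep,\cZ,\cB)$ has zero circulations, hence is a gradient, and then using positive-definiteness of $\bL$. Your extra care in identifying the admissible variations as exactly the single-valued gradients and in justifying the normal trace for $\bL\bh_\ep\in H(\div)$ only makes explicit what the paper's proof leaves implicit.
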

\begin{proof}
Given that the functional $J_\ep$ is quadratic, 
the result is achieved by calculating the vanishing of its
first variation. 
Let $w\in H^1(\Omega_\ep)$; then
\begin{equation*}
\begin{split}
\delta J_\ep(\bh_\ep)[w] &=  \lim_{t\to0} \frac{J_\ep(\bh_\ep+t\nabla w) - J_\ep (\bh_\ep)}{t} \\
&=\lim_{t\to0}\frac1t\left(\int_{\Omega_\ep} t\nabla w \cdot  
{\bf L h}_\ep + \frac12 t^2   \nabla w \cdot
{\bf L} \nabla w \,\de{\bf x}\right) \\
&= \int_{\Omega_\ep}\!\! \nabla w\cdot \bL\bh_\ep\,\de\bx =
-\int_{\Omega_\ep} \!\! w\div(\bL\bh_\ep)\,\de\bx+\int_{\partial\Omega_\ep} \!\!\!\!
w\,\bL\bh_\ep\cdot \bn\,\de s(\bx).
\end{split}
\end{equation*}
By setting $\delta J_\ep(\bh_\ep)[w]=0$ for all $w\in H^1(\Omega_\ep)$, we get \eqref{eq:Euler}.

To prove uniqueness, assume that $\bh_\ep$ and $\widetilde\bh_\ep$ 
both solve system \eqref{eq:Euler}. Then the path integral of the difference
$\bh_\ep-\widetilde\bh_\ep$ over any loop in $\Omega_\ep$ must vanish,
and so $\bh_\ep-\widetilde\bh_\ep = \nabla u$
for some function $u\in H^1(\Omega_\ep)$.
Since $u$ solves the weak Euler equation
\begin{equation*}
\int_{\Omega_\ep} \nabla w \cdot \bL\nabla u\,\de\bx=0,\qquad\text{for all $w\in H^1(\Omega_\ep)$},
\end{equation*}
taking $w=u$ we obtain $J_\ep(\nabla u)=0$, and as $\bL$ is positive definite, we conclude that
$\nabla u=0$.
\end{proof}

\subsection{Singular Strains and the Limit $\ep \to 0$}
We introduce the 
 \emph{singular strains} $\bk_i$ which will be the building blocks of the
singular part of the strain field $\bh$ that represents the
system of dislocations. 
Define $\bk_i(\cdot;\bz_i):\RR^2\setminus\{\bz_i\} \to \R2$, $i=1,\ldots,N$, as
 \begin{equation}
   \label{k_def}
   \bk_i (\bx;\bz_i) = \frac{b_i\lambda}{2\pi(\lambda^2 (x_1-z_{i,1})^2+(x_2-z_{i,2})^2)}
\left(
  \begin{array}{r}
    -(x_2-z_{i,2})\\ x_1-z_{i,1}\,\,
  \end{array}
\right).
 \end{equation}
We will often abbreviate $\bk_i(\cdot ; \bz_i)$ as $\bk_i$. 
Each $\bk_i$ 
can be written as the gradient of a multi-valued function, precisely
\begin{equation*}
  \bk_i (\bx;\bz_i) = \frac{b_i}{2\pi}\nabla_{\bx} \arctan \left(
\frac{x_2-z_{i,2}}{\lambda (x_1-z_{i,1})}\right),
\end{equation*}
and it is straightforward to calculate directly that
  \begin{subequations}
\begin{align}
\curl_\bx\bk_i(\bx;\bz_i)&= b_i\delta_{\bz_i}(\bx)\quad {\rm in}\,\, \RR^2, \label{curlk}\\
{\rm div}_{\bx}\left(\bL \bk_i(\bx;\bz_i)\right)&=0\qquad\qquad\, {\rm in}\,\, \RR^2\setminus \{\bz_i\},
\label{divk}\\
\bL\bk_i (\bx;\bz_i)\cdot \bn&=0\qquad\qquad\, {\rm on}\,\, \partial E_{\ep,i}.\label{bdryk}
\end{align}
  \end{subequations}
In particular, by \eqref{bdryk} and \eqref{divk},
\begin{equation}\label{compatibility}
  \int_{\partial \Omega} \bL\sum_{i=1}^N\bk_i(\by ;\bz_i) \cdot \bn(\by)\, \de s(\by)=
  \int_{\partial\Omega_\ep} \bL\sum_{i=1}^N\bk_i(\by ;\bz_i) \cdot \bn(\by)\, \de s(\by)=0.
\end{equation}
Note that the integral in \eqref{compatibility}
is only well-defined when the dislocations are away from the boundary ($\ep_0>0$).

\begin{lemma} \label{lem:Iep}
Let $\ep_0>0$ be fixed as in \eqref{Omega_ep}. 
For every $\ep \in (0,\ep_0)$, let $\bh \in H_0^{\curl}(\Omega_\ep,\, \cZ,\, \cB )$. Then
\begin{equation}
\label{h_form}
    {\bf h} = \sum_{i=1}^N{\bf k}_i+\nabla u
\end{equation}
for some $u \in H^1(\Omega_\ep)$.
Moreover, the minimization problem
  \begin{equation}\label{minJ}
    \min \left\{ J_\ep(\bh) \,\, \Big| \,\, \bh \in 
H_0^{\curl}(\Omega_\ep,\, \cZ,\, \cB )\right\}
  \end{equation}
is equivalent to the minimization problem
\begin{equation}\label{minI}
  \min \left\{ I_\ep( u) \,\, \Big| \,\, u \in H^1(\Omega_\ep),\; \int_{\Omega_{\ep_0}}u(\bx) \de \bx = 0\right\},
\end{equation}
where
\begin{equation}\label{Iep}
   I_\ep(u) = \int_{\Omega_\ep}W(\nabla u)\de \bx+\sum_{i=1}^N\int_{\partial \Omega}
u  {\bf L k}_i\cdot \bn \,\de s
-\sum_{i=1}^N\sum_{j\neq i}\int_{\partial E_{\ep,i}}  u {\bf L  k}_j\cdot \bn\,\de s
\end{equation}
Minimizers $u_\ep$ of \eqref{minI} are  solutions of the Neumann problem
\begin{equation}\label{uNeumann}
\left\{  \begin{array}{ll}
{\rm div}\left( {\bf L}\nabla u\right) = 0&  {\rm in} \,\, \Omega_\ep, \\
 {\bf L}\!\left(\nabla u+ \sum_{i=1}^N{\bf k}_i \right)\cdot \bn\ = 0&
 {\rm on} \,\, \partial \Omega, \\
  {\bf L}\!\left(\nabla u+ \sum_{j \neq i}{\bf k}_j \right)\cdot \bn\ = 0&
{\rm on} \,\, \partial E_{\ep,i}, \,\, i = 1,2,\ldots, N.
  \end{array}\right.
\end{equation}
\end{lemma}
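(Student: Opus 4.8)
The plan is to prove the three assertions of Lemma~\ref{lem:Iep} in order: the decomposition \eqref{h_form}, the equivalence of the minimization problems \eqref{minJ} and \eqref{minI}, and the characterization \eqref{uNeumann} of the minimizers.

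\textbf{Step 1: the decomposition.} Given $\bh\in H_0^{\curl}(\Omega_\ep,\cZ,\cB)$, set $\bv:=\bh-\sum_{i=1}^N\bk_i$. By \eqref{curlk} we have $\curl\bk_i=b_i\delta_{\bz_i}$ in $\RR^2$, but since $\bz_i\notin\overline{\Omega_\ep}$, on the open set $\Omega_\ep$ we have $\curl\bk_i=0$; combined with $\curl\bh=0$ this gives $\curl\bv=0$ in $\Omega_\ep$. Moreover, for each $j$, the circulation $\int_{\partial E_{\ep,i}}\bk_j\cdot\bt\,\de s$ equals $b_j$ if $j=i$ and $0$ if $j\neq i$ (the latter because $\bz_j\notin E_{\ep,i}$, the former by \eqref{curlk} and the fact that $\partial E_{\ep,i}$ is a loop around $\bz_i$ only). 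Hence $\int_{\partial E_{\ep,i}}\bv\cdot\bt\,\de s=b_i-b_i=0$ for every $i$. Since the bounded components of $\RR^2\setminus\overline{\Omega_\ep}$ are exactly the $E_{\ep,i}$, a curl-free field on $\Omega_\ep$ with vanishing circulation around each of them is a gradient: $\bv=\nabla u$ for some $u\in H^1(\Omega_\ep)$, using that $\bv\in L^2$ and $\Omega_\ep$ has Lipschitz boundary. This proves \eqref{h_form}. The normalization $\int_{\Omega_{\ep_0}}u\,\de\bx=0$ can always be arranged by adding a constant to $u$, which does not change $\nabla u$; note we integrate over the fixed set $\Omega_{\ep_0}\subset\Omega_\ep$ so the condition makes sense uniformly in $\ep$.

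\textbf{Step 2: reduction of the functional.} Substitute $\bh=\sum_i\bk_i+\nabla u$ into $J_\ep$ and expand the quadratic form:
\begin{equation*}
J_\ep(\bh)=\int_{\Omega_\ep}W(\nabla u)\,\de\bx+\sum_{i=1}^N\int_{\Omega_\ep}\nabla u\cdot\bL\bk_i\,\de\bx+\int_{\Omega_\ep}W\Big(\sum_i\bk_i\Big)\,\de\bx.
\end{equation*}
The last term is a constant independent of $u$ (it depends only on $\cZ$, $\cB$, and $\ep$), so it can be dropped for the purpose of minimization. The cross term is handled by integration by parts: for each $i$,
\begin{equation*}
\int_{\Omega_\ep}\nabla u\cdot\bL\bk_i\,\de\bx=-\int_{\Omega_\ep}u\,\div(\bL\bk_i)\,\de\bx+\int_{\partial\Omega_\ep}u\,\bL\bk_i\cdot\bn\,\de s=\int_{\partial\Omega_\ep}u\,\bL\bk_i\cdot\bn\,\de s,
\end{equation*}
the volume term vanishing by \eqref{divk}. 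Now $\partial\Omega_\ep=\partial\Omega\cup\bigcup_{j=1}^N\partial E_{\ep,j}$ (with the inner normal orientation absorbed into the sign), so the boundary integral splits as $\int_{\partial\Omega}u\,\bL\bk_i\cdot\bn\,\de s-\sum_{j=1}^N\int_{\partial E_{\ep,j}}u\,\bL\bk_i\cdot\bn\,\de s$. By \eqref{bdryk}, the term $j=i$ in the inner sum vanishes, leaving exactly $-\sum_{j\neq i}\int_{\partial E_{\ep,j}}u\,\bL\bk_i\cdot\bn\,\de s$. Summing over $i$ and relabelling the indices in the double sum produces precisely the expression $I_\ep(u)$ in \eqref{Iep} (up to the discarded constant). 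Since $\bh\mapsto(\sum_i\bk_i, u)$ sets up a bijective correspondence (modulo constants, fixed by the normalization) between $H_0^{\curl}(\Omega_\ep)$ and $\{u\in H^1(\Omega_\ep):\int_{\Omega_{\ep_0}}u=0\}$, and $J_\ep(\bh)=I_\ep(u)+\mathrm{const}$, the two minimization problems are equivalent.

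\textbf{Step 3: Euler--Lagrange equations.} $I_\ep$ is again a quadratic functional on the affine space $H^1(\Omega_\ep)$ (the normalization is a closed linear constraint whose associated Lagrange multiplier is killed because perturbations $w$ with $\int_{\Omega_{\ep_0}}w=0$ still span a dense-enough space — more simply, adding a constant to $u$ changes $I_\ep$ only through the linear boundary terms, and one checks those constants integrate to zero using \eqref{compatibility}, so in fact $I_\ep$ is invariant under constants and the constraint is harmless). Compute $\delta I_\ep(u_\ep)[w]=0$ for all $w\in H^1(\Omega_\ep)$: the bulk term gives $\int_{\Omega_\ep}\nabla w\cdot\bL\nabla u_\ep\,\de\bx$, and the linear terms contribute $\sum_i\int_{\partial\Omega}w\,\bL\bk_i\cdot\bn\,\de s-\sum_i\sum_{j\neq i}\int_{\partial E_{\ep,i}}w\,\bL\bk_j\cdot\bn\,\de s$. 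Integrating the bulk term by parts and collecting boundary contributions on $\partial\Omega$ and on each $\partial E_{\ep,i}$ separately, the vanishing of the coefficient of $w$ in the interior yields $\div(\bL\nabla u_\ep)=0$ in $\Omega_\ep$, on $\partial\Omega$ yields $\bL(\nabla u_\ep+\sum_i\bk_i)\cdot\bn=0$, and on $\partial E_{\ep,i}$ yields $\bL(\nabla u_\ep+\sum_{j\neq i}\bk_j)\cdot\bn=0$ (here the $j=i$ term is again absent on $\partial E_{\ep,i}$ by \eqref{bdryk}, which is why it does not appear). This is exactly \eqref{uNeumann}.

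\textbf{Main obstacle.} The routine parts are the two integrations by parts and the bookkeeping of which $\bk_j$-terms survive on which boundary piece. The one genuinely delicate point is Step~1: justifying that a curl-free $L^2$ field on the (Lipschitz, finitely-connected) domain $\Omega_\ep$ with zero circulation around each hole is a gradient of an $H^1$ function. I would invoke the standard Hodge/de Rham-type decomposition for $H(\curl)$ on Lipschitz domains, or equivalently first regularize $\bh$ and pass to the limit; the finitely-connected topology of $\Omega_\ep$ and the matching of the $N$ circulation conditions with the $N$ generators of $H^1$ is the crux. Everything downstream is then a direct computation.
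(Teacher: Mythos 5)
Your proof is correct and follows essentially the same route as the paper's: decompose $\bh=\sum_i\bk_i+\nabla u$ using curl-freeness together with the cancellation of circulations around each core, expand the quadratic energy and integrate by parts with \eqref{divk} and \eqref{bdryk} to reduce $J_\ep$ to $I_\ep$ plus a $u$-independent constant, then set the first variation of $I_\ep$ to zero to obtain \eqref{uNeumann}. Your additional care in Step 1 (explicit circulation count around each hole) and with the normalization constraint (invariance of $I_\ep$ under additive constants via \eqref{compatibility} and the divergence theorem) merely makes explicit points the paper treats tersely.
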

\begin{proof}
Let $\bh\in H_0^{\curl}(\Omega_\ep,\cZ,\cB)$.
By \eqref{curlk}, $\int_{\ell}(\bh - \sum_{i=1}^N{\bf k}_i )\cdot \de \bx=0$ for
any loop $\ell \subset \Omega_\ep$ and 
thus, $\bh- \sum_{i=1}^N{\bf k}_i=\nabla u$ for some $u\in H^1(\Omega_\ep)$.
In turn
\begin{align}\label{Jexpand}
  J_\ep({\bf h}) = \sum_{i=1}^N J_\ep({\bf k}_i) +
\sum_{i=1}^{N-1}\sum_{j=i+1}^N\int_{\Omega_\ep} {\bf L k}_i\cdot {\bf k}_j\de \bx + I_\ep(u)
\end{align}
where $ I_\ep(u) $ is given by \eqref{Iep} and where in 
the last sum in the expression for $I_\ep$ we omit the terms with $i=j$ because
${\bf L k}_i\cdot \bn=0$ on each $\partial E_{\ep,i}$ (see
\eqref{bdryk}).
(Note that  the last integral in \eqref{Iep} has a 
minus sign because $\bn$ points outside $E_{\ep,i}$, by definition of outer normal).
Hence, the minimization of $J_\ep$ over ${\bf h} \in H_0^{\curl}(\Omega_\ep,\, \cZ,\, \cB)$ 
is achieved by minimizing $I_\ep$ over $u \in H^1(\Omega_\ep)$. 
The normalization condition in \eqref{minI} is introduced in
  order to make the problem coercive, and has the effect of changing
  $u$ in \eqref{h_form} by an additive constant. Since $\nabla u$ is
  the relevant quantity, this does not affect the minimization problem.



To show that minimizers solve the Neumann problem \eqref{uNeumann}, we
calculate the first variation of $I_\ep$ and apply Stokes's theorem to find 
that, given $\varphi\in H^1(\Omega_\ep)$,
\begin{align*}
  \delta I_\ep(u)[\varphi]
 = &   -\int_{\Omega_\ep} \varphi{\rm div} ({\bf L}\nabla u) \,\de x +
\int_{\partial \Omega} \varphi {\bf L} \left(\nabla u +
\sum_{i=1}^N{\bf k}_i \right)\cdot \bn\,\de s 
- \sum_{i=1}^N \int_{\partial E_{\ep,i}}\varphi {\bf L}\left(\nabla u +
\sum_{j \neq i}{\bf k}_j \right)\cdot \bn\,\de s.
\end{align*}
By requiring that $\delta I_\ep(u)[\varphi]=0$ for all $\varphi\in H^1(\Omega_\ep)$, we obtain that
\eqref{uNeumann} is satisfied.
 \end{proof}



The following two lemmas are
slight adaptations of \cite[Lemmas 4.2, 4.3]{Leoni},
so we do not present
the full proofs here.
The key tool is an $\ep$-independent Poincar\'e inequality
for $\Omega_\ep$, \cite[Proposition A.2]{Leoni}. 
\begin{lemma}\label{lem:coercive}
Let $\ep_0>0$ be fixed as in \eqref{Omega_ep}. 
Assume that $\bL$ is positive definite. 
Then there exist positive constants $c_1$ and $c_2$, 
depending only on $\bL$ and $\ep_0$ (in particular, 
independent of $\ep$), such that
\begin{equation}
  \label{Icoercive}
  I_\ep(u) \geq c_1 \|u\|^2_{H^1(\Omega_\ep)}-c_2\|u\|_{H^1(\Omega_\ep)}.
\end{equation}
for all $u \in H^1(\Omega_\ep)$ subject to the constraint
\begin{equation}
  \label{uavg}
  \int_{\Omega_{\ep_0}}u(\bx) \,\de\bx = 0.
\end{equation}
Moreover, for every $\ep \in (0,\ep_0)$ the minimization problem \eqref{minI}
admits a unique solution $u_\ep \in H^1(\Omega_\ep)$
satisfying \eqref{uavg}. Each $u_\ep$ satisfies
\begin{equation}
  \label{ubound}
\| u \|_{ H^1(\Omega_\ep)}\leq M,
\end{equation}
where $M>0$ is a constant independent of $\ep$. 
\end{lemma}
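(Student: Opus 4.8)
The plan is to establish the coercivity estimate \eqref{Icoercive} first, and then deduce existence, uniqueness, and the uniform bound \eqref{ubound} from it by standard direct-method arguments. For the coercivity, I would start from the expression \eqref{Iep} for $I_\ep(u)$. The first term $\int_{\Omega_\ep}W(\nabla u)\,\de\bx$ is bounded below by $\tfrac12\min\{\mu,\mu\lambda^2\}\|\nabla u\|_{L^2(\Omega_\ep)}^2$ by positive-definiteness of $\bL$. The remaining two terms are linear in $u$ and involve boundary integrals of $u$ against the fixed fields $\bL\bk_i$ over $\partial\Omega$ and over $\partial E_{\ep,i}$. The key point is to bound these linear terms by $C\|u\|_{H^1(\Omega_\ep)}$ with $C$ independent of $\ep$: on $\partial\Omega$ this is immediate since $\partial\Omega$ is fixed and $\bL\bk_i$ is smooth and bounded there (the dislocations stay at distance $\geq\ep_0$ from $\partial\Omega$), combined with the trace inequality $\|u\|_{L^2(\partial\Omega)}\leq C\|u\|_{H^1(\Omega_{\ep_0})}\leq C\|u\|_{H^1(\Omega_\ep)}$. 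For the integrals over $\partial E_{\ep,i}$, one must be more careful because the domain of integration shrinks; here I would use that $|\bL\bk_j|$ is bounded on $\partial E_{\ep,i}$ for $j\neq i$ uniformly in $\ep$ (since $\bz_j$ stays away from $E_{\ep,i}$), that the length of $\partial E_{\ep,i}$ is $O(\ep)$, and an $\ep$-independent trace estimate $\|u\|_{L^2(\partial E_{\ep,i})}\leq C\ep^{-1/2}\|u\|_{H^1(\Omega_\ep)}$ or, better, a scaled trace inequality that compensates the $O(\ep)$ length, so that the product is controlled by $C\|u\|_{H^1(\Omega_\ep)}$ with $C$ uniform in $\ep$. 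Combining, $I_\ep(u)\geq c_1\|\nabla u\|_{L^2}^2 - c_2\|u\|_{H^1}$, and then one upgrades $\|\nabla u\|_{L^2}^2$ to $\|u\|_{H^1}^2$ using the $\ep$-independent Poincar\'e inequality of \cite[Proposition A.2]{Leoni} together with the normalization \eqref{uavg}, which exactly kills the constants.

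Once \eqref{Icoercive} is in hand, existence follows by the direct method: take a minimizing sequence $u_n$ in the admissible class $\{u\in H^1(\Omega_\ep):\int_{\Omega_{\ep_0}}u=0\}$; coercivity gives $\sup_n\|u_n\|_{H^1(\Omega_\ep)}<\infty$, so a subsequence converges weakly in $H^1(\Omega_\ep)$ to some $u_\ep$; the constraint \eqref{uavg} is preserved under weak convergence (it is a bounded linear functional), and $I_\ep$ is weakly lower semicontinuous because $u\mapsto\int_{\Omega_\ep}W(\nabla u)$ is convex and continuous hence weakly l.s.c., while the two linear boundary terms are weakly continuous (they are bounded linear functionals of $u$, using the compactness of the trace maps). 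Hence $u_\ep$ is a minimizer. Uniqueness follows from strict convexity of $I_\ep$ modulo constants: the quadratic part $\int_{\Omega_\ep}W(\nabla u)$ is strictly convex in $\nabla u$, and the linear terms do not affect strict convexity, so any two minimizers differ by a constant, which the normalization \eqref{uavg} forces to be zero. Alternatively one can invoke Lemma~\ref{lem:Euler}, since a minimizer of $I_\ep$ corresponds via \eqref{h_form} to a minimizer of $J_\ep$, whose uniqueness is already established there.

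For the uniform bound \eqref{ubound}, I would evaluate $I_\ep$ at the competitor $u\equiv 0$ (which satisfies the normalization), giving $I_\ep(u_\ep)\leq I_\ep(0)=0$ since the boundary terms vanish at $u=0$. Plugging $I_\ep(u_\ep)\leq 0$ into \eqref{Icoercive} yields $c_1\|u_\ep\|_{H^1(\Omega_\ep)}^2\leq c_2\|u_\ep\|_{H^1(\Omega_\ep)}$, hence $\|u_\ep\|_{H^1(\Omega_\ep)}\leq c_2/c_1=:M$, with $M$ independent of $\ep$ because $c_1,c_2$ are.

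The main obstacle is the $\ep$-uniformity throughout: both the trace estimates on the tiny boundaries $\partial E_{\ep,i}$ and the Poincar\'e inequality on the perforated domain $\Omega_\ep$ must be controlled by constants that do not blow up as $\ep\to 0$. This is precisely where \cite[Proposition A.2]{Leoni} and the scaling behaviour of the trace inequality on annular regions around the $\bz_i$ do the work; since the paper explicitly cites \cite[Lemmas 4.2, 4.3]{Leoni} and \cite[Proposition A.2]{Leoni} for this, I would import those uniform estimates and focus the exposition on checking that the linear boundary terms in \eqref{Iep} are absorbed with $\ep$-independent constants, the rest being a routine application of the direct method and convexity.
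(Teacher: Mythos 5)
Your proposal is correct and follows essentially the same route as the paper's own (sketch of) proof: positive-definiteness of $\bL$ for the quadratic term, $\ep$-uniform trace bounds on $\partial\Omega$ and on the shrinking boundaries $\partial E_{\ep,i}$ together with the $\ep$-independent Poincar\'e inequality of \cite[Proposition A.2]{Leoni} (where the constraint \eqref{uavg} enters), followed by the direct method with strict convexity modulo constants for existence and uniqueness, and the competitor $u\equiv 0$ with $I_\ep(0)=0$ to get the uniform bound \eqref{ubound}. The only cosmetic difference is that you re-derive the boundary estimates via a scaled trace inequality and Cauchy--Schwarz, whereas the paper imports them directly from \cite{Leoni} as \eqref{bdryLeoni}.
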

\begin{proof}[Sketch of Proof]
Since $\bL$ is positive definite, we have
\begin{equation*}
  I_\ep(u) \geq  C\int_{\Omega_\ep} |\nabla u|^2\,\de\bx
-\sum_{i=1}^N\sup_{\bx\in\partial \Omega}|\bL \bk_i(\bx, \bz_i)|
\int_{\partial \Omega}|u_\ep|\,\de s
-\sum_{i=1}^N\sum_{j\neq i}\sup_{\bx\in\partial  E_{\ep,i}}|\bL \bk_j(\bx, \bz_j)|
\int_{\partial E_{\ep,i}}|u_\ep|\,\de s
\end{equation*}
Adapting the proof of \cite[Proposition A.2]{Leoni}, for which \eqref{uavg} is crucial, we can find a constant $c_1=c_1(\lambda,\ep_0)$ such that
  \begin{equation}\label{500}
\int_{\Omega_\ep} |\nabla u|^2\,\de\bx \geq c_1\|u\|_{H^1(\Omega_\ep)}^2.
  \end{equation}
Moreover, in \cite{Leoni} it is proved that there exist constants $C_1,C_2$ independent of $\ep$ such that
\begin{equation}\label{bdryLeoni}
  \int_{\partial \Omega}|u_\ep|\,\de s\leq C_1 \|u_\ep\|_{H^1(\Omega_\ep)},
\quad {\rm and}\quad 
\int_{\partial E_{\ep,i}}|u_\ep|\,\de s\leq C_2 \|u_\ep\|_{H^1(\Omega_\ep)}.
\end{equation}
%
%
From the definition of $\bk_i(\bx,\bz_i)$ (see \eqref{k_def}), it is easy to see that there exist constants $c'=c'(\lambda,\ep_0)$ and $c''=c''(\lambda,\ep_0)$ such that
\begin{equation}\label{501}
  \sup_{\bx\in\partial \Omega}|\bL \bk_i(\bx, \bz_i)|<c',
\quad {\rm and}\quad
\sup_{\bx\in\partial  E_{\ep,i}}|\bL \bk_j(\bx, \bz_j)|<c'', \,\,i\ne j.
\end{equation}
Estimates \eqref{500}, \eqref{bdryLeoni}, and \eqref{501} prove \eqref{Icoercive}.
The existence and uniqueness of the solution, and the bound
 \eqref{ubound} are straightforward conclusions from the convexity and coercivity
of the functional $I_\ep$ and the fact that $I_\ep(0)=0$.
\end{proof}

\begin{remark}\label{rem:exist}
  Lemma \ref{lem:Iep} guarantees the equivalence of the minimization
  problems \eqref{minJ} and \eqref{minI}, and Lemma
  \ref{lem:coercive} gives the existence of minimizers for
  \eqref{minI}, thus establishing the existence of minimizers for \eqref{minJ}.
\end{remark}
\begin{lemma}\label{lem:I0}
Assume that $\bL$ is positive definite, and let $u_\ep$ be the
 unique solution to \eqref{minI} that satisfies \eqref{uavg}.
Then, as $\ep\to0$, the sequence $\{u_\ep\}$ converges strongly in
$H_{\rm loc}^1(\Omega\setminus\cZ)$ to a solution $u_0$ of the problem
\begin{equation}\label{minI0}
   \min \left\{ I_0( u) \,\, \Big| \,\, u \in H^1(\Omega),\;
\int_{\Omega_{\ep_0}}u(\bx) \de \bx = 0\right\},
\end{equation}
where
\begin{equation*}
 I_0(u) := \int_{\Omega}W(\nabla u)\,\de\bx+\sum_{i=1}^N\int_{\partial \Omega}
u\,  {\bf L k}_i\cdot {{\bf n}} \,\de s.
\end{equation*}
Moreover,  $I_\ep (u_\ep) \to I_0(u_0)$. 
\end{lemma}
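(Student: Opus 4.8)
The plan is to establish the convergence by exploiting the uniform bound \eqref{ubound} and the variational structure. First, since $\{u_\ep\}$ is bounded in $H^1(\Omega_\ep)$ uniformly in $\ep$ by Lemma~\ref{lem:coercive}, for any compact $K\subset\subset\Omega\setminus\cZ$ we have $K\subset\Omega_\ep$ for $\ep$ small enough, so $\{u_\ep\}$ is bounded in $H^1(K)$; by a diagonal argument over an exhaustion of $\Omega\setminus\cZ$ by such compacta, we extract a subsequence converging weakly in $H^1_{\rm loc}(\Omega\setminus\cZ)$ to some $u_0$. The key point is that $u_0$ extends to an element of $H^1(\Omega)$: the gradients $\nabla u_\ep$ are bounded in $L^2(\Omega_\ep)$ with the small ellipses $E_{\ep,i}$ removed, and since a point (or shrinking ellipse) has zero $H^1$-capacity, the limiting function has finite Dirichlet energy on all of $\Omega$; combined with the normalization \eqref{uavg} on the fixed set $\Omega_{\ep_0}$, this gives $u_0\in H^1(\Omega)$ with $\int_{\Omega_{\ep_0}}u_0\,\de\bx=0$.

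Next I would identify $u_0$ as the minimizer of $I_0$. The lower-semicontinuity of $\int_{\Omega_\ep}W(\nabla u)$ handles the bulk term, and the boundary term $\sum_i\int_{\partial\Omega}u\,\bL\bk_i\cdot\bn\,\de s$ is continuous under the convergence because $\partial\Omega$ stays away from $\cZ$ (the trace on $\partial\Omega$ converges, as the convergence is strong near $\partial\Omega$). The term $\sum_{i\ne j}\int_{\partial E_{\ep,i}}u\,\bL\bk_j\cdot\bn\,\de s$ in $I_\ep$ must be shown to vanish as $\ep\to0$: on $\partial E_{\ep,i}$ the kernel $\bL\bk_j$ ($j\ne i$) is bounded by \eqref{501}, the perimeter of $E_{\ep,i}$ is $O(\ep)$, and the trace of $u_\ep$ on $\partial E_{\ep,i}$ is controlled by $\|u_\ep\|_{H^1(\Omega_\ep)}\le M$ via a scaling estimate (the second inequality in \eqref{bdryLeoni}, or directly a trace estimate on the annulus between $\partial E_{\ep,i}$ and $\partial E_{\ep_0,i}$ with constant $O(\ep^{1/2})$), so that term is $O(\sqrt\ep)\to0$. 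Hence $\liminf_{\ep\to0}I_\ep(u_\ep)\ge I_0(u_0)$, and comparing with a competitor: for any $v\in H^1(\Omega)$ with the normalization, restrict $v$ to $\Omega_\ep$, use it as a test function, note $I_\ep(v)\to I_0(v)$ (again the $\partial E_{\ep,i}$ term vanishes and the bulk term converges by dominated convergence since $\nabla v\in L^2(\Omega)$), so $I_\ep(u_\ep)\le I_\ep(v)\to I_0(v)$; taking $\inf$ over $v$ gives $\limsup I_\ep(u_\ep)\le\min I_0$. Together with $I_0(u_0)\le\liminf I_\ep(u_\ep)$ this forces $u_0$ to minimize $I_0$ and $I_\ep(u_\ep)\to I_0(u_0)$.

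Finally, to upgrade weak to strong $H^1_{\rm loc}$ convergence: once $I_\ep(u_\ep)\to I_0(u_0)$ and the boundary terms converge as above, the bulk energies satisfy $\int_{\Omega_\ep}W(\nabla u_\ep)\to\int_\Omega W(\nabla u_0)$; since $W(\nabla u_\ep)\ge0$ and locally we have weak $L^2$ convergence of $\nabla u_\ep$, convergence of the total bulk energy together with weak lower semicontinuity on every compact subset forces $\nabla u_\ep\to\nabla u_0$ strongly in $L^2_{\rm loc}(\Omega\setminus\cZ)$ (convergence of norms plus weak convergence in a Hilbert space gives strong convergence). Uniqueness of the minimizer of $I_0$ (from strict convexity, exactly as for $I_\ep$) shows the whole sequence converges, not just a subsequence. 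The main obstacle is the capacity/extension argument showing $u_0\in H^1(\Omega)$ rather than merely $H^1_{\rm loc}(\Omega\setminus\cZ)$ — i.e. controlling the behavior near the excised cores — but this is precisely where the $\ep$-independent Poincar\'e inequality of \cite[Proposition A.2]{Leoni} and the uniform bound \eqref{ubound} do the work, so the argument parallels \cite[Lemma 4.3]{Leoni} closely.
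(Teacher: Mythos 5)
Your argument is correct and follows the same overall strategy as the paper's (sketched) proof: uniform bound from \eqref{ubound}, weak compactness, a liminf inequality away from the cores plus vanishing of the $\partial E_{\ep,i}$ terms, a limsup inequality by testing $I_\ep$ against a fixed competitor in $H^1(\Omega)$, convergence of the energies, strong local convergence from energy convergence plus weak convergence, and uniqueness of the minimizer of $I_0$ to upgrade subsequential to full convergence. The one place where you genuinely deviate is how the limit is shown to live in $H^1(\Omega)$ and how the core boundary terms are killed: the paper extends each $u_\ep$ to all of $\Omega$ with an $\ep$-independent bound $\|u_\ep\|_{H^1(\Omega)}\leq cM$ (the uniform extension operator of \cite[Prop.~A.7]{Leoni}) and extracts a weak $H^1(\Omega)$ limit directly, whereas you take a local limit on $\Omega\setminus\cZ$ and invoke the zero $H^1$-capacity of points to conclude $u_0\in H^1(\Omega)$; both routes work, but note that your capacity argument does not by itself dispose of the terms $\int_{\partial E_{\ep,i}}u_\ep\,\bL\bk_j\cdot\bn\,\de s$, and the second inequality in \eqref{bdryLeoni} only gives their boundedness (the constant $C_2$ is $\ep$-independent, not decaying), so you must rely on your alternative, the $O(\sqrt\ep)$ trace estimate on the thin annulus; that estimate is true, but its proof again requires a uniform-in-$\ep$ Sobolev/trace control near the shrinking cores (e.g.\ via the same extension operator or a uniformly controlled $L^p$ embedding, $p>2$), so in the end you are using essentially the same ingredient as the paper, just packaged differently. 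Two further small points you handle implicitly but should state: the trace convergence on $\partial\Omega$ and the passage to the limit in the constraint \eqref{uavg} require weak $H^1$ convergence on the fixed sets $\Omega_\delta$ (which reach $\partial\Omega$), not merely on compact subsets of the open set $\Omega\setminus\cZ$; this is available because the uniform bound holds on all of $\Omega_\ep\supset\Omega_\delta$.
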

\begin{proof}[Sketch of Proof]
One can extend $u_\ep$ to $\Omega$ and obtain an inequality
$\|u_\ep\|_{H^1(\Omega)}\leq cM$, with $M$ as in \eqref{ubound}
\cite[Prop. A.7]{Leoni}, which leads to $\int_{\partial
  E_{\ep,i}}u_\ep \bL \bk_k \cdot \bn \, \de s\to 0$
as $\ep \to 0$. Also, a subsequence (not relabeled) of 
$\{u_\ep\}$ converges $u_\ep \rightharpoonup u_0$
weakly in $H^1(\Omega)$.
Now, if we fix $\delta \in (0,\ep_0)$ and consider
$\ep <\delta$, from \eqref{Iep} we have
\[
  I_\ep(u_\ep) \geq \int_{\Omega_\delta}
W(\nabla u_\ep)\de \bx+\sum_{i=1}^N\int_{\partial \Omega}
u_\ep  {\bf L k}_i\cdot \bn \,\de s
-\sum_{i=1}^N\sum_{j\neq i}\int_{\partial E_{\ep,i}}  u_\ep {\bf L  k}_j\cdot \bn\,\de s.
\]
Taking $\ep \to 0$ gives $\liminf_{\ep \to 0} I_\ep(u_\ep) \geq
\int_{\Omega_\delta}
W(\nabla u_0)\de \bx+\sum_{i=1}^N\int_{\partial \Omega}
u_0  {\bf L k}_i\cdot \bn \,\de s$.
Taking $\delta \to 0$ gives
$\liminf_{\ep \to 0} I_\ep(u_\ep) \geq I_0(u_0)$. But
$I_\ep(u_\ep)\leq I_\ep(u_0)$, so 
$\limsup_{\ep \to 0} I_\ep(u_\ep) \leq I_0(u_0)$,
and $I_\ep(u_\ep) \to I_0(u_0)$. Strong convergence of $u_\ep \to u_0$
in $H^1(\Omega\setminus \mathcal{Z})$ follows from
convergence of the energies, see \cite{EvansWeak}.
\end{proof}

\begin{remark}\label{rem:Green}
The solutions $u_0$ to \eqref{minI0} are also solutions of the Neumann problem
\begin{equation}\label{u0Neumann}
\left\{  \begin{array}{ll}
{\rm div}\left( {\bf L}\nabla u\right) = 0&  {\rm in} \,\, \Omega, \\
 {\bf L}\!\left(\nabla u+ \sum_{i=1}^N{\bf k}_i \right)\cdot \bn\ = 0&
 {\rm on} \,\, \partial \Omega,
  \end{array}\right.
\end{equation}
and therefore $u_0$
 can be represented in terms
of a Green's function
\begin{equation}
  \label{uGreen}
  u_0(\bx;\bz_1,\ldots,\bz_N) = \int_{\partial \Omega}G(\bx,\by)
\bL\sum_{i=1}^N\bk_i (\by;\bz_i)\cdot \bn(\by)\, \de s(\by),
\end{equation}
exhibiting the explicit dependence on the parameters $\bz_1,\ldots, \bz_N$.
%
 The function  $\nabla u_0(\bx;\bz_1,\ldots,\bz_N)$ represents the elastic strain
at the point $\bx \in \Omega$ due to the presence of $\partial \Omega$
and the dislocations at $\bz_i$ 
with Burgers moduli $b_i$. For this reason, 
we refer to $\nabla u_0(\bx;\bz_1,\ldots,\bz_N)$  as the
boundary-response strain at $\bx$ due to $\cZ$. 
\end{remark}
Combining the results of Lemmas \ref{lem:Iep}, \ref{lem:coercive}, 
and 
\ref{lem:I0}, we conclude the following theorem, which characterizes
the strain field associated to a system of dislocations.

\begin{theorem}\label{thm:limith}
Let $\cZ$ and $\cB$ be given, and let $\Omega\in\RR^2$ be a bounded domain with
Lipschitz boundary $\partial \Omega$. Then the minimization problem
\[
\min_{\bh \in H_0^{\curl} (\Omega_\ep, \cZ, \cB)}\int_{\Omega_\ep}W({\bf h})\de \bx 
\]
admits a unique solution, $\bh_\ep$. 
Moreover, $\bh_\ep \to \bh_0$ strongly in 
$L^2_{\rm loc}(\Omega\setminus \cZ)$, where 
\begin{equation}
  \label{h0def}
  \bh_0(\bx) = \sum_{i=1}^N\bk_i(\bx;\bz_i)+\nabla u_0(\bx;\bz_1,\ldots,\bz_N)
\end{equation}
is a solution of
\begin{equation*}
        \left\{
          \begin{array}{l}
\curl\bh=\sum_{i=1}^N\bb_i\delta_{\bz_i}\\
	\div\bL\bh=0 
\end{array}\right.\quad {\rm in}\,\, \Omega,
\end{equation*}
in the sense of distributions, and
$u_0$ is a minimizer of  \eqref{minI0} and solves 
the Neumann problem \eqref{u0Neumann}.
\end{theorem}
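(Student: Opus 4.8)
The plan is to assemble Theorem~\ref{thm:limith} almost entirely from the three preceding lemmas, adding only the bookkeeping needed to pass from the $u$-formulation back to the $\bh$-formulation and to verify that the limiting object solves \eqref{curldiv}. First I would invoke Lemma~\ref{lem:Iep} together with Remark~\ref{rem:exist}: the minimization of $J_\ep$ over $H_0^{\curl}(\Omega_\ep,\cZ,\cB)$ is equivalent to the minimization of $I_\ep$ over $\{u\in H^1(\Omega_\ep):\int_{\Omega_{\ep_0}}u=0\}$, and Lemma~\ref{lem:coercive} produces a unique minimizer $u_\ep$ of the latter. Translating back through the decomposition \eqref{h_form}, $\bh_\ep=\sum_{i=1}^N\bk_i+\nabla u_\ep$ is then the unique minimizer of $J_\ep$; uniqueness of $\bh_\ep$ is inherited from uniqueness of $u_\ep$ (the constant ambiguity in $u_\ep$ disappears upon taking $\nabla u_\ep$), and it also matches the uniqueness statement in Lemma~\ref{lem:Euler} for the corresponding Euler system \eqref{eq:Euler}.

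Next I would handle the limit $\ep\to0$. By Lemma~\ref{lem:I0}, $u_\ep\to u_0$ strongly in $H^1_{\rm loc}(\Omega\setminus\cZ)$, where $u_0$ minimizes $I_0$ subject to the same normalization, and by Remark~\ref{rem:Green} this $u_0$ solves the Neumann problem \eqref{u0Neumann}. Since the singular strains $\bk_i$ are fixed functions, smooth on $\Omega\setminus\cZ$, the convergence $u_\ep\to u_0$ in $H^1_{\rm loc}(\Omega\setminus\cZ)$ immediately gives $\bh_\ep=\sum_i\bk_i+\nabla u_\ep\to\sum_i\bk_i+\nabla u_0=:\bh_0$ strongly in $L^2_{\rm loc}(\Omega\setminus\cZ)$, which is exactly the claimed convergence and the formula \eqref{h0def}.

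Finally I would check that $\bh_0$ solves \eqref{curldiv} distributionally. For the curl equation, $\nabla u_0$ contributes nothing to $\curl$, so $\curl\bh_0=\sum_{i=1}^N\curl\bk_i=\sum_{i=1}^N b_i\delta_{\bz_i}$ by \eqref{curlk} (recalling the abuse of notation identifying $b_i$ and $\bb_i$). For the equilibrium equation, on $\Omega\setminus\cZ$ one has $\div\bL\bk_i=0$ by \eqref{divk} and $\div\bL\nabla u_0=0$ by the first line of \eqref{u0Neumann}, so $\div\bL\bh_0=0$ in $\Omega\setminus\cZ$; to upgrade this to a distributional identity on all of $\Omega$ I would test against $\varphi\in C_c^\infty(\Omega)$, integrate by parts on $\Omega_\rho:=\Omega\setminus\bigcup_i B_\rho(\bz_i)$, and show the boundary terms on $\partial B_\rho(\bz_i)$ vanish as $\rho\to0$ — this uses that $\bh_0\in L^2$ near each $\bz_i$ (the $\bk_i$ are in $L^2$, and $\nabla u_0\in L^2(\Omega)$ since $u_0\in H^1(\Omega)$), so that $\int_{B_\rho(\bz_i)}\bL\bh_0\cdot\nabla\varphi\,\de\bx\to0$. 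This last passage to the limit is the only genuinely technical point; everything else is a direct citation of the lemmas, with the main obstacle being merely to state carefully that $\bh_0$ has no distributional mass in its divergence concentrated at the $\bz_i$, which follows from the $L^2_{\rm loc}$ regularity of $\bh_0$ across $\cZ$.
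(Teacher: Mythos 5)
Your assembly of the main claims follows the paper's own route: Theorem \ref{thm:limith} is obtained there precisely by combining Lemma \ref{lem:Iep} (with Remark \ref{rem:exist}), Lemma \ref{lem:coercive}, Lemma \ref{lem:I0}, and Remark \ref{rem:Green}, so your treatment of existence, uniqueness of $\bh_\ep$ via uniqueness of $u_\ep$, the convergence $\bh_\ep\to\bh_0$ in $L^2_{\rm loc}(\Omega\setminus\cZ)$, and the identification of $u_0$ is essentially identical to the paper and correct.

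The problem is in your final step, the verification that $\div\bL\bh_0=0$ holds distributionally across the points of $\cZ$. You assert that $\bk_i\in L^2$ near $\bz_i$; this is false: $|\bk_i|\sim b_i/(2\pi r)$, so $\int_{B_\rho(\bz_i)}|\bk_i|^2\,\de\bx$ diverges logarithmically --- indeed this divergence is the very reason the energy must be renormalized. What is true, $\bh_0\in L^1_{\rm loc}(\Omega)$, does give $\int_{B_\rho(\bz_i)}\bL\bh_0\cdot\nabla\varphi\,\de\bx\to0$, but that alone cannot exclude a point mass in the divergence: the field $\bx/|\bx|^2=\nabla\log|\bx|$ lies in $L^1_{\rm loc}(\R2)$, is divergence-free away from the origin, has vanishing ball integrals of the same kind, and yet satisfies $\div(\bx/|\bx|^2)=2\pi\delta_0$. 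The possible delta mass sits exactly in the boundary term $\int_{\partial B_\rho(\bz_i)}\varphi\,\bL\bh_0\cdot\bn\,\de s$ produced by the integration by parts on $\Omega\setminus\bigcup_i B_\rho(\bz_i)$, which your argument does not control (it is $O(1)$, not obviously $o(1)$, since $|\bh_0|\sim 1/\rho$ on a curve of length $\sim\rho$). The repair must use the structure of $\bk_i$, not just its integrability: excise the elliptical cores $E_{\rho,i}$ instead of balls, so that by the traction-free property \eqref{bdryk} the term $\int_{\partial E_{\rho,i}}\varphi\,\bL\bk_i\cdot\bn\,\de s$ vanishes identically (equivalently, $\bL\bk_i$ has zero flux through any loop around $\bz_i$, so on circles one can subtract $\varphi(\bz_i)$ and get $O(\rho)$), while the contributions of $\nabla u_0$ and of $\bk_j$, $j\neq i$, which are bounded near $\bz_i$, are $O(\rho)$; note also that $\div(\bL\nabla u_0)=0$ already holds weakly on all of $\Omega$ directly from the minimality of $u_0\in H^1(\Omega)$ for $I_0$, so the delicate point concerns only $\bk_i$.
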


\subsection{Alternative form of the fundamental singular strains}
In the isotropic case, $\lambda = 1$, it can be convenient to use
polar coordinates $(r_i,\theta_i)$ centered at $\bz_i$, rather than
Cartesian coordinates. In the anisotropic case,
when calculating integrals over the cores
$E_{R,i}$, we find some calculations are simplified by using
eccentric anomaly, $\tau_i$, centered at
$\bz_i$, which is defined as
\begin{equation*}
  \tau_i : = \arctan \left(\frac{\tan \theta_i}{\lambda}\right).
\end{equation*}
Using $\tau$, the ellipse $\partial E_{R,i}$ is parametrized by the curve
$\rho(\tau_i) = \bz_i+(R\cos \tau_i, \lambda R\sin \tau_i)$, so
\begin{equation*}
 \bn = \frac{1}{\sqrt{\lambda^2\cos^2 \tau_i + \sin^2 \tau_i\,}} \left(
      \begin{array}{r}
        \lambda \cos \tau_i \\ \sin \tau_i 
      \end{array}\right).
\end{equation*}
For any $\bx \in \Omega$, we can find $r>0$ and $\tau_i$ such that
$\bx=\bz_i+(r\cos\tau_i,\lambda r\sin\tau_i)$.
Substituting the form of $\bx$ into \eqref{k_def} yields
\begin{equation}\label{kanomaly}
\bk_i(\bx;\bz_i) 
= \frac{b_i}{2\pi \lambda r}\left(
      \begin{array}{r}
        -\lambda \sin \tau_i \\ \cos \tau_i 
      \end{array}\right).
\end{equation}



\section{The Renormalized Energy}\label{sec:renorm}
\begin{theorem}\label{thm:3.1}
  Let $0<\ep<\ep_0$ be as in \eqref{Omega_ep} and let $\bh_\ep$ be a solution of \eqref{minJ}. Then
\begin{equation}\label{energyep}
    J_\ep(\bh_\ep) = \int_{\Omega_\ep} \frac12 \bh_\ep \cdot \bL \bh_\ep\, \de \bx
= \sum_{i=1}^N \frac{\mu \lambda b_i^2}{4\pi}\log\frac{1}{\ep} +
U(\bz_1,\ldots,\bz_N) + O(\ep),
  \end{equation}
where
\begin{equation}
  \label{U}
  U(\bz_1,\ldots,\bz_N) :=U_S(\bz_1,\ldots,\bz_N) +U_I(\bz_1,\ldots,\bz_N)
  + U_E(\bz_1,\ldots,\bz_N) 
\end{equation}
and, using \eqref{h0def}, for any $\ep<R<\ep_0$ 
\begin{align}  
&      U_S (\bz_1,\ldots,\bz_N):= \sum_{i=1}^N \frac{\mu \lambda
        b_i^2}{4\pi}\log R+ \sum_{i=1}^N 
      \int_{\Omega\setminus E_{R,i}}W(\bk_i)\dx, \label{US}\\
 &     U_I (\bz_1,\ldots,\bz_N):=\sum_{i=1}^{N-1}  \sum_{j=i+1}^N  
      \int_{\Omega} \bk_j \cdot \bL \bk_i \dx,   \nonumber \\ 
 &     U_E (\bz_1,\ldots,\bz_N):=  \int_{\Omega} W(\nabla u_0)\dx + 
\sum_{i=1}^N \int_{\partial \Omega}u_0 \bL \bk_i \cdot \bn\, \de s. \label{UE}
\end{align}
\end{theorem}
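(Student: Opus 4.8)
The plan is to start from the decomposition \eqref{Jexpand} obtained in Lemma~\ref{lem:Iep}: for the minimizer $\bh_\ep = \sum_{i=1}^N \bk_i + \nabla u_\ep$, one has
\[
J_\ep(\bh_\ep) = \sum_{i=1}^N J_\ep(\bk_i) + \sum_{i=1}^{N-1}\sum_{j=i+1}^N \int_{\Omega_\ep}\bL\bk_i\cdot\bk_j\,\de\bx + I_\ep(u_\ep).
\]
I would treat the three groups of terms separately and show that, up to $O(\ep)$, they produce the three pieces $U_S$, $U_I$, and $U_E$ together with the logarithmic core term. The key observation making this clean is \eqref{divk}--\eqref{bdryk}: since $\div(\bL\bk_i)=0$ away from $\bz_i$ and $\bL\bk_i\cdot\bn=0$ on $\partial E_{\ep,i}$, the ``self'' integrals and ``interaction'' integrals over $\Omega_\ep$ differ from those over $\Omega\setminus E_{R,i}$ (resp.\ $\Omega$) only by controlled terms.

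\emph{Self-energy term.} For each $i$, split $J_\ep(\bk_i) = \int_{\Omega_\ep}W(\bk_i)\,\de\bx = \int_{E_{R,i}\setminus E_{\ep,i}}W(\bk_i)\,\de\bx + \int_{\Omega\setminus E_{R,i}}W(\bk_i)\,\de\bx - \sum_{j\ne i}\int_{E_{R,j}\setminus E_{\ep,j}} W(\bk_i)\,\de\bx$. The middle integral is exactly the $R$-dependent piece appearing in \eqref{US}; the last sum is smooth on those small annuli and is $O(\ep)$ (in fact $O(\ep^2)$, since $\bk_i$ is bounded near $\bz_j$ for $j\ne i$). For the annulus $E_{R,i}\setminus E_{\ep,i}$ I would use the eccentric-anomaly parametrization \eqref{kanomaly}: there $W(\bk_i) = \tfrac12 \bk_i\cdot\bL\bk_i$, and a direct computation with $\bL$ as in \eqref{L} gives $\bk_i\cdot\bL\bk_i = \dfrac{\mu\lambda^2 b_i^2\sin^2\tau_i + \mu\lambda^2 b_i^2\cos^2\tau_i}{4\pi^2\lambda^2 r^2}\cdot(\text{Jacobian factors})$, which after integrating in $\tau_i\in[0,2\pi)$ and $r\in(\ep,R)$ collapses to $\dfrac{\mu\lambda b_i^2}{4\pi}\log\dfrac{R}{\ep}$. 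Writing $\log(R/\ep)=\log(1/\ep)+\log R$ produces the core term $\sum_i\tfrac{\mu\lambda b_i^2}{4\pi}\log(1/\ep)$ and the $\log R$ contribution inside $U_S$; independence of the final answer from $R$ is then automatic since $J_\ep(\bh_\ep)$ does not see $R$.

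\emph{Interaction term.} Here $\bk_i$ and $\bk_j$ ($i\ne j$) are each smooth and bounded away from the \emph{other} dislocation's core, so $\int_{\Omega_\ep}\bL\bk_i\cdot\bk_j\,\de\bx = \int_\Omega \bL\bk_i\cdot\bk_j\,\de\bx - \sum_k\int_{E_{\ep,k}}\bL\bk_i\cdot\bk_j\,\de\bx$, and the small-core corrections are $O(\ep^2)$ for $k\notin\{i,j\}$ and $O(\ep)$ even when $k\in\{i,j\}$ because the singular factor is integrable against a bounded one on a disk of radius $\sim\ep$. This yields $U_I + O(\ep)$.

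\emph{Boundary/elastic term.} For $I_\ep(u_\ep)$, Lemma~\ref{lem:I0} gives $I_\ep(u_\ep)\to I_0(u_0)=U_E$, but for the expansion \eqref{expansion} I need the rate $I_\ep(u_\ep)=U_E+O(\ep)$. I would get this by comparing $I_\ep(u_\ep)$ with $I_\ep(u_0)$ and $I_0(u_0)$: since $u_0\in H^1(\Omega)$ solves \eqref{u0Neumann}, restricting it to $\Omega_\ep$ gives a competitor with $I_\ep(u_0) = I_0(u_0) - \int_{\cup_i E_{\ep,i}}W(\nabla u_0)\,\de\bx - \sum_{i}\sum_{j\ne i}\int_{\partial E_{\ep,i}}u_0\,\bL\bk_j\cdot\bn\,\de s$; the area integral is $O(\ep^2)$ and, using $\div(\bL\bk_j)=0$ in $E_{\ep,i}$, each boundary integral equals $\int_{E_{\ep,i}}\nabla u_0\cdot\bL\bk_j\,\de\bx = O(\ep)$ (one power of $\ep$ from the area, the singular factor being integrable since $\bk_j$ is bounded on $E_{\ep,i}$). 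So $I_\ep(u_0)=U_E+O(\ep)$, whence $I_\ep(u_\ep)\le U_E+O(\ep)$. For the matching lower bound I would extend $u_\ep$ to $\Omega$ (as in \cite[Prop.~A.7]{Leoni}) with uniformly bounded $H^1$ norm, so that $I_\ep(u_\ep)\ge I_0(\text{extension}) - O(\ep) \ge I_0(u_0) - O(\ep) = U_E - O(\ep)$, where the $O(\ep)$ absorbs the difference between integrating over $\Omega$ and $\Omega_\ep$ (area $O(\ep^2)$) and the boundary terms over $\partial E_{\ep,i}$, estimated as above. I expect the main obstacle to be precisely this quantitative $O(\ep)$ control of $I_\ep(u_\ep)$: Lemma~\ref{lem:I0} as stated only gives convergence, not a rate, so one must exploit the regularity of $u_0$ near $\cZ$ (it is harmonic-type, hence smooth, in a neighborhood of each $\bz_i$ since the $\bk_i$ are smooth there) and the uniform bounds from Lemma~\ref{lem:coercive} to upgrade convergence to a linear rate. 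Collecting the three contributions gives \eqref{energyep} with $U=U_S+U_I+U_E$, and $R$-independence of $U$ follows since every other term in \eqref{energyep} is manifestly independent of $R$.
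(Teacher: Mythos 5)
Your proposal follows essentially the same route as the paper: the starting point is the decomposition \eqref{Jexpand}, the core term comes from the exact annulus computation via \eqref{kanomaly} giving $\frac{\mu\lambda b_i^2}{4\pi}\log(R/\ep)$, the interaction integrals are handled by integrability of $\bk_i$ against boundedness of $\bk_j$ (the paper phrases this as dominated convergence, you give the direct $O(\ep)$ estimate), the elastic part uses $I_\ep(u_\ep)\to I_0(u_0)$ from Lemma \ref{lem:I0}, and $R$-independence is argued identically. One slip to fix in the self-energy step: since $\Omega_\ep\setminus E_{R,i}$ differs from $\Omega\setminus E_{R,i}$ exactly by the removed cores, the correction terms are $-\sum_{j\ne i}\int_{E_{\ep,j}}W(\bk_i)\,\de\bx$, taken over the small cores $E_{\ep,j}$ (measure $O(\ep^2)$), not over the annuli $E_{R,j}\setminus E_{\ep,j}$, whose measure is of order $R^2$ and does not vanish with $\ep$; your $O(\ep^2)$ justification matches the former, so this reads as a typo, but as written the set identity is false.

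Where you go beyond the paper is the quantitative $O(\ep)$ rate for $I_\ep(u_\ep)-I_0(u_0)$: note that the paper's own proof only invokes the convergence from Lemma \ref{lem:I0} and silently absorbs the rate into the $O(\ep)$ of \eqref{energyep}, so you are attempting something the paper does not actually prove. Your upper bound is sound: $u_0$ is admissible for \eqref{minI}, it is smooth near the cores by interior elliptic regularity, so $I_\ep(u_\ep)\le I_\ep(u_0)=U_E+O(\ep)$. The lower bound, however, has a gap. Writing $\tilde u_\ep$ for the extension of $u_\ep$ to $\Omega$, comparing $I_\ep(u_\ep)$ with $I_0(\tilde u_\ep)\ge I_0(u_0)$ requires $\sum_i\int_{E_{\ep,i}}W(\nabla\tilde u_\ep)\,\de\bx=O(\ep)$; but the only uniform control available is $\|\tilde u_\ep\|_{H^1(\Omega)}\le cM$, which makes this term $O(1)$, not $O(\ep)$ -- your ``area $O(\ep^2)$'' reasoning would need a uniform $L^\infty$ bound on $\nabla\tilde u_\ep$ in the cores, which is not supplied by Lemma \ref{lem:coercive} or Lemma \ref{lem:I0} and would require separate elliptic estimates up to the shrinking core boundaries (or a finer choice of extension). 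The boundary corrections are fine: $\int_{\partial E_{\ep,i}}u_\ep\,\bL\bk_j\cdot\bn\,\de s=\int_{E_{\ep,i}}\nabla\tilde u_\ep\cdot\bL\bk_j\,\de\bx=O(\ep)$ by $\div(\bL\bk_j)=0$ in $E_{\ep,i}$ and Cauchy--Schwarz. So, modulo the typo, your argument reaches the same level of completeness as the paper's; the extra rate claim in your lower bound is the one step that is not yet justified.
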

\begin{remark}
We refer to the energy $U$ in \eqref{U} as the \emph{renormalized energy}.
$U_S$ is the 
``self'' energy associated
to the presence of a dislocation, 
$U_I$ is the
energy associated to the interaction between dislocations, and
 $U_E$ is the energy associated to the elastic medium.
 Note that Theorem \ref{thm:3.1} asserts that
the renormalized energy is independent of $\ep$, and we will show that it
can be written in terms of the limit shear $\bh_0$ as in Theorem \ref{thm:limith}. 
This fact
will be used in identifying the force on a dislocation
in Section \ref{sec:force}.
\end{remark}
\begin{proof}
If we expand $J_\ep(\bh_\ep)$ as in \eqref{Jexpand}, we see that
the three terms on the  right side of \eqref{Jexpand} 
correspond to the terms $U_S,\, U_I, \, U_E$. We begin with
$\sum_{i=1}^NJ_\ep(\bk_i)$ and fix $R \in (\ep, \ep_0)$.
Each term in this sum can be written as
\begin{equation*}
  J_\ep(\bk_i)=\int_{\Omega_\ep \setminus E_{R,i}} W(\bk_i)\dx  
\,\, + \,\,  \int_{A_{i,R,\ep}}W(\bk_i)\dx ,
\end{equation*}
where $A_{i,R,\ep} := E_{R,i}\setminus E_{\ep,i}$. Using the
representation for $\bk_i$ in \eqref{kanomaly},
we have
\begin{equation}\label{annulus}
  \int_{A_{i,R,\ep}}\frac12 \bk_i\cdot \bL
  \bk_i \dx 
=\frac{\mu \lambda b_i^2}{4\pi}\log\left(\frac{R}{\ep}\right),
\end{equation}
and this accounts for the 
$\log \frac{1}{\ep}$ term in the energy \eqref{energyep} and the
$\log R$ term in  \eqref{US}.

To show that
\begin{equation}\label{conv}
 \sum_{i=1}^{N-1} \sum_{j=i+1}^N \int_{\Omega_\ep} \bk_j \cdot \bL \bk_i \dx
\longrightarrow
\sum_{i=1}^{N-1} \sum_{j=i+1}^N \int_{\Omega}
 \bk_j \cdot \bL \bk_i \dx \qquad {\rm as}\quad 
\ep \to 0,
\end{equation}
we note that $\bk_i$ is integrable in $E_{R,i}$ (it grows like
$r_i^{-1}$) and $\bL\bk_j$ is bounded on $E_{R,i}$ for $j\neq i$,
hence \eqref{conv} holds by Lebesgue Dominated Convergence Theorem.
From Lemma \ref{lem:I0}, we have that $I_\ep(u_\ep) \to I_0(u_0)$ as
$\ep \to 0$, whence \eqref{UE} follows.

To show that $U$ is independent of $R$, we need only show that $U_S$ 
is independent of $R$. If we take $R' \ne R$, without loss of
generality we can assume $R'<R$, then by \eqref{annulus}
\[
\int_{\Omega \setminus E_{R',i}} W(\bk_i)\dx-\int_{\Omega \setminus E_{R,i}} W(\bk_i)\dx
=\int_{A_{i,R,R'}}W(\bk_i)\dx =\frac{\mu \lambda  b_i^2}{4\pi}\log \frac{R}{R'},
\]
so that
$$
 \int_{\Omega \setminus E_{R',i}} W(\bk_i)\dx  +
\frac{\mu \lambda b_i^2}{4\pi}\log R' 
= \int_{\Omega \setminus E_{R,i}} W(\bk_i)\dx  +
\frac{\mu \lambda b_i^2}{4\pi}\log R,
$$
which shows that \eqref{US} is independent of the choice of $R<\ep_0$.
\end{proof}

The renormalized energy $U$ will blow up like the $\log$ of the
distance between dislocations, i.e. $U \sim -\log|\bz_i-\bz_j|$. This
is made precise in \cite{BFLM}.



\section{The Force on a Dislocation}\label{sec:force}
In this section we determine the force $\bj_i$ on the dislocation at $\bz_i$ for a
given a system of dislocations $\cZ$ with Burgers vectors $\cB$,
and show that $\bj_i = -\nabla_{\bz_i} U$. 
Following \cite{Gurtin}, the Peach-K\"ohler force on the dislocation
at $\bz_i$ (also called the net configurational traction)
is given by
\begin{equation}
  \label{PKdef}
  \bj_i := \lim_{R\to 0}\int_{\partial E_{R,i}}{\bf C}\bn \, \de s,
\end{equation}
where the stress tensor is the
Eshelby  stress (\cite{Eshelby,Gurtin95})
  \begin{equation}
    \label{eshelby}
    {\bf C} := W(\bh_0){\bf I} - \bh_0{\otimes}(\bL\bh_0).
  \end{equation}
Here ${\bf I}$ is the identity matrix and $\bh_0$ is defined in \eqref{h0def}.
\begin{theorem}\label{thm:force}
Let $\bh_0$ be the limiting singular strain
defined by \eqref{h0def} and let $U$ the associated renormalized energy given in \eqref{U}.
Then for $\ell\in\{1,\ldots,N\}$ and any $R\in(0,\ep_0)$
  \begin{equation}
    \label{gradU}
 \nabla_{\bz_\ell}U(\bz_1,\ldots,\bz_N) =
-\int_{\partial E_{R,\ell}} \left\{ W(\bh_0){\bf I} - \bh_0{\otimes}(\bL\bh_0) \right\}\bn \, \de s,
  \end{equation}
and so the force on the
dislocation at $\bz_\ell$ is given by 
\begin{equation}
  \label{jgradU}
\bj_\ell = -\nabla_{\bz_\ell}U.
\end{equation}
Moreover, 
\begin{equation}
  \label{pkforce}
  \bj_\ell (\bz_1,\ldots,\bz_N)
= b_\ell \bJ \bL \left(\nabla u_0(\bz_\ell;\bz_1,\dots,\bz_N) +
\sum_{i\neq \ell}\bk_i(\bz_\ell;\bz_i)\right), \quad
{\rm where}\quad 
\bJ = \left(
  \begin{array}{rr}
    0&1 \\ -1& 0
  \end{array}
\right),
\end{equation}
and $u_0$ is the solution to \eqref{u0Neumann}.
\end{theorem}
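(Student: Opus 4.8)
The plan is to prove the three assertions of Theorem \ref{thm:force} in sequence, with the bulk of the work going into establishing \eqref{gradU}; once that identity is known, \eqref{jgradU} follows immediately by comparing with the definition \eqref{PKdef}--\eqref{eshelby} of the Peach-K\"ohler force (the integral in \eqref{gradU} is $R$-independent, so taking $R\to0$ is trivial), and \eqref{pkforce} will come from evaluating the Eshelby-stress integral explicitly using the near-$\bz_\ell$ structure of $\bh_0$.

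\medskip

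\noindent\textit{Step 1: differentiating $U$ in $\bz_\ell$.} I would start from the decomposition $U = U_S + U_I + U_E$ in \eqref{U} and differentiate term by term with respect to $\bz_\ell$, being careful that the natural domains of integration ($\Omega\setminus E_{R,i}$, etc.) themselves depend on $\bz_\ell$. A cleaner route is to fix the core radius $R$ and use the $R$-independence established in Theorem \ref{thm:3.1}: one may freely choose $R$ small, and at the end let the final expression be evaluated on $\partial E_{R,\ell}$ for \emph{any} admissible $R$. For the self-energy $U_S$, only the $i=\ell$ summand depends on $\bz_\ell$, and since $\bk_\ell(\bx;\bz_\ell)$ depends on $\bx$ and $\bz_\ell$ only through $\bx-\bz_\ell$, one has $\nabla_{\bz_\ell}W(\bk_\ell(\bx;\bz_\ell)) = -\nabla_{\bx}W(\bk_\ell(\bx;\bz_\ell))$; integrating this total derivative over $\Omega\setminus E_{R,\ell}$ and applying the divergence theorem converts it to boundary terms on $\partial\Omega$ and on $\partial E_{R,\ell}$. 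For $U_I$, differentiating $\int_\Omega \bk_j\cdot\bL\bk_i$ in $\bz_\ell$ again only touches summands containing the index $\ell$, and the same translation-invariance trick turns $\nabla_{\bz_\ell}$ into $-\nabla_{\bx}$ on the $\bk_\ell$ factor, producing boundary integrals. For $U_E$, I would use the representation \eqref{uGreen} of $u_0$ via the Green's function together with \eqref{u0Neumann}; differentiating under the integral and using the weak form of the Neumann problem (testing against $\partial_{\bz_\ell}u_0$, which is legitimate since it lies in $H^1$) should show that the $\partial\Omega$ contributions assemble into exactly what is needed.

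\medskip

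\noindent\textit{Step 2: reassembling into the Eshelby stress.} After Step 1, $\nabla_{\bz_\ell}U$ is a sum of boundary integrals over $\partial\Omega$ and over $\partial E_{R,\ell}$. The key algebraic identity to exploit is that for a strain $\bg$ satisfying $\div(\bL\bg)=0$ (which $\bh_0$ does, away from $\cZ$, by Theorem \ref{thm:limith}), the Eshelby stress $W(\bg)\bI - \bg\otimes(\bL\bg)$ is divergence-free; hence its flux through $\partial\Omega$ plus its flux through $\bigcup_i\partial E_{R,i}$ vanishes, and moreover the flux through $\partial\Omega$ itself vanishes because $\bL\bh_0\cdot\bn = 0$ there by \eqref{u0Neumann}. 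Combined with the observation (via \eqref{divk}, \eqref{bdryk}) that the fluxes through $\partial E_{R,j}$ for $j\neq\ell$ are smooth in $\bz_\ell$ and the singular $\bk_\ell$-contributions to those are controlled, this lets me discard the $\partial\Omega$ and $j\neq\ell$ boundary terms and collect everything onto $\partial E_{R,\ell}$, where the sum of the pieces must — by a direct but somewhat lengthy matching of terms — coincide with $-\int_{\partial E_{R,\ell}}\{W(\bh_0)\bI - \bh_0\otimes(\bL\bh_0)\}\bn\,\de s$. This bookkeeping, making the self-, interaction-, and elastic-energy derivative terms recombine precisely into the Eshelby tensor evaluated on $\bh_0$, is where I expect the main obstacle to lie; it requires the cross-terms $\bk_\ell\otimes\bL\nabla u_0$, $\bk_\ell\otimes\bL\bk_j$ and their transposes to cancel or combine in just the right way, and keeping track of symmetry of $\bL$ and of the $\otimes$ versus $\cdot$ structure is the delicate point.

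\medskip

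\noindent\textit{Step 3: the explicit formula \eqref{pkforce}.} Finally, to pass from \eqref{gradU} to \eqref{pkforce}, I would take $R\to0$ in $\int_{\partial E_{R,\ell}}\{W(\bh_0)\bI - \bh_0\otimes(\bL\bh_0)\}\bn\,\de s$. Write $\bh_0 = \bk_\ell + \bg_\ell$ near $\bz_\ell$, where $\bg_\ell := \nabla u_0 + \sum_{j\neq\ell}\bk_j$ is continuous at $\bz_\ell$. Expanding the Eshelby tensor quadratically gives a purely singular $\bk_\ell$-only piece, a cross piece linear in $\bk_\ell$, and a regular piece; the $\bk_\ell$-only piece integrates to zero over $\partial E_{R,\ell}$ by the exact symmetry of \eqref{kanomaly} (this is the standard fact that a single dislocation exerts no net force on itself), the regular piece vanishes as $R\to0$ since $|\partial E_{R,\ell}| = O(R)$, and the cross piece, evaluated with $\bk_\ell$ as in \eqref{kanomaly} and $\bn$ as parametrized by the eccentric anomaly $\tau_\ell$, yields after an elementary angular integration exactly $-b_\ell\bJ\bL\bg_\ell(\bz_\ell)$. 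Here the $\bJ$ arises because $\bk_\ell$ is a rotated gradient and $\bJ^2 = -\bI$. This computation is routine given the alternative form \eqref{kanomaly} of the singular strain, so the substance of the theorem really is Step 2.
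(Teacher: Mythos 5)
Your overall architecture (differentiate $U$ in $\bz_\ell$, collect the result into the Eshelby flux on $\partial E_{R,\ell}$, then expand $\bh_0=\bk_\ell+\wtbh$ to get the explicit force) matches the paper, and your Step 3 is essentially the paper's own derivation of \eqref{pkforce}. The genuine gaps are in Steps 1--2, i.e.\ precisely in the proof of \eqref{gradU}, which is the substance of the theorem. For $U_I$ (and for the core integrals implicit in the definition of $U$) you differentiate under the integral sign in $\bz_\ell$, but the differentiated integrand behaves like $|\bx-\bz_\ell|^{-2}$ near $\bz_\ell$, which is not locally integrable in two dimensions: the interchange is not justified, and if you repair it by excising a small core you pick up boundary terms on that core which do \emph{not} vanish as the excision radius shrinks --- they are of exactly the same order as the force you are computing, so ignoring them changes the answer. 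The paper avoids this by never differentiating through a singularity: it rewrites $U$ as an integral over $\Omega_\ep$ plus integrals over and around the cores, proves a transport lemma (Lemma \ref{lem:derivatives}) for derivatives of integrals over the moving sets $E_\ep(\bz_\ell+\xi\bv)$, and uses the key identity $D_\xi\bk_\ell=\partial_\xi\bk_\ell+(\bv\cdot\nabla)\bk_\ell=0$ so that the singular field is only differentiated in the frame moving with it. Your ``cleaner route'' of fixing $R$ does not address this: the core $E_{R,\ell}$ still translates with $\bz_\ell$, and the associated transport terms must be tracked (for $U_S$ that transport term happens to vanish because $W(\bk_\ell)$ is constant on the ellipse, but that needs to be said).

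Second, your mechanism in Step 2 for discarding the outer-boundary contribution is incorrect: on $\partial\Omega$ the condition $\bL\bh_0\cdot\bn=0$ kills only $\bh_0\otimes(\bL\bh_0)\bn$, not $W(\bh_0)\bn$, so the Eshelby flux through $\partial\Omega$ equals $\int_{\partial\Omega}W(\bh_0)\,\bn\,\de s$, which is generically nonzero (by the divergence-free property it equals the sum of the fluxes through all $\partial E_{R,i}$, i.e.\ the sum of the forces on all dislocations, and these do not cancel because of image forces from the boundary). For the same reason the fluxes through $\partial E_{R,j}$, $j\ne\ell$, cannot be discarded by a flux-balance argument; in the paper the $\partial\Omega$ and $j\ne\ell$ terms disappear only through explicit cancellation between the derivative of $\int_{\Omega_\ep}W(\bh_0)\dx$ and the derivative of the core terms (compare \eqref{DUhat} with \eqref{comb1}--\eqref{comb2}), which is the bookkeeping you defer. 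Finally, a small but real slip in Step 3: with $\bJ$ as in \eqref{pkforce}, the cross term of $\int_{\partial E_{R,\ell}}\{W(\bh_0)\bI-\bh_0\otimes(\bL\bh_0)\}\bn\,\de s$ tends to $+\,b_\ell\bJ\bL\wtbh(\bz_\ell)$, giving $\bj_\ell=+\,b_\ell\bJ\bL\wtbh(\bz_\ell)$; your stated limit $-\,b_\ell\bJ\bL\wtbh(\bz_\ell)$ would produce the negative of \eqref{pkforce}.
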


\begin{proof}
Formula \eqref{gradU} is proved in the Appendix.
From \eqref{gradU}, we show \eqref{jgradU} and
 \eqref{pkforce} as follows.
Recall that the renormalized energy is independent of $R<\ep_0$
 (see the proof of Theorem \ref{thm:3.1}), so 
\begin{equation}\label{pklim}
 -\nabla_{\bz_\ell}U=  \int_{\partial E_{R,\ell}}{\bf C}\bn\, \de s = 
\lim_{R\to 0}\int_{\partial E_{R,\ell}}{\bf C}\bn\, \de s =\bj_\ell,
\end{equation}
establishing \eqref{jgradU} in view of \eqref{PKdef}.

 The field $\bh_0$ has a singularity at $\bz_\ell$ which comes from
the term $\bk_\ell$ (see \eqref{h0def}), 
and we decompose $\bh_0$ into the \emph{singular part at $\bz_\ell$} and
the \emph{regular part at $\bz_\ell$},
\begin{equation}
  \label{hreg}
  \bh_0(\bx) = \bk_\ell (\bx;\bz_\ell) + \wtbh (\bx),
\qquad {\rm where}\,\,\,\, \wtbh (\bx) := 
\nabla u_0(\bx) +
\sum_{i\neq \ell}\bk_i(\bx;\bz_i).
\end{equation}
Using \eqref{hreg}, we write the Eshelby stress ${\bf C}$ from \eqref{eshelby} as
\begin{equation*}
  {\bf C}  = \left(\frac12 \bk_\ell{\cdot} \bL \bk_\ell +
\bk_\ell{\cdot} \bL \wtbh + \frac12 \wtbh {\cdot} \bL \wtbh \right)\bI
- \bk_\ell{\otimes}(\bL \bk_\ell)-\bk_\ell{\otimes}(\bL \wtbh)
-\wtbh{\otimes}(\bL \bk_\ell)-\wtbh{\otimes}(\bL \wtbh).
\end{equation*}
Since $\wtbh$ is smooth and bounded on $\overline{E}_{R,\ell}$ we have
\begin{equation*}
  \lim_{R\to 0}\int_{\partial E_{R,\ell}} \left(
\frac12 \wtbh \cdot \bL \wtbh \right)  \bn\, \de s = 0
\quad {\rm and} \,\,
\lim_{R\to 0}\int_{\partial E_{R,\ell}}\wtbh{\otimes}(\bL \wtbh) \bn\,  \de s =0.
\end{equation*}
Using the fact that $\bL \bk_\ell \cdot \bn=0$ on $\partial E_{R,\ell}$ (see \eqref{bdryk}) we have
\begin{equation*}
  \int_{\partial E_{R,\ell}}\wtbh {\otimes}\left( \bL \bk_\ell\right)\bn\, \de s=0,
\quad {\rm and} \,\,
\int_{\partial E_{R,\ell}}\bk_\ell{\otimes}\left( \bL \bk_\ell\right)\bn\, \de s=0,
\quad \forall R<\bar{R}.
\end{equation*}
Using \eqref{kanomaly} 
we have
$\bk_\ell \cdot \bL \bk_\ell =  \mu b_\ell^2/(4\pi^2 R^2)$
 on $\partial E_{R,\ell}$, and so
\begin{equation*}
  \int_{\partial E_{R,\ell}} \frac12 (\bk_\ell \cdot \bL \bk_\ell) \bn\, \de s
=  \frac{\mu b_\ell^2}{8\pi^2 R^2}\int_{\partial E_{R,\ell}}  \bn\, \de s = 0,
\end{equation*}
for all $R<\ep_0$.
Therefore the only contribution in \eqref{pklim} will come from 
\begin{align*}
  \left(\left( \bk_\ell \cdot \bL\wtbh \right)\bI -\bk_\ell {\otimes}(\bL \wtbh)\right)\bn
&=\left(\bn{\otimes} \bk_\ell\right)\bL\wtbh  -\left( \bk_\ell {\otimes}\bn \right) \bL \wtbh.
\end{align*}
Now, using \eqref{kanomaly}, it is easy to see that
\begin{equation*}
  \bn{\otimes} \bk_\ell-  \bk_\ell {\otimes}\bn
= \frac{b_\ell}{2\pi \lambda R\sqrt{\lambda^2\cos^2 \tau + \sin^2 \tau\,}}
\left(
  \begin{array}{cc}
    0 & \lambda \\ -\lambda & 0
  \end{array}\right)
\end{equation*}
and, since  $\de s =  R\sqrt{\lambda^2\cos^2 \tau + \sin^2 \tau\,}\de\tau$,
\begin{align*}
  \int_{\partial E_{R,\ell}} \left(\bn{\otimes} \bk_\ell-  \bk_\ell {\otimes} \bn
\right) \bL\wtbh \, \de s= 
\frac{b_\ell}{2\pi}\int_0^{2\pi}  \bJ \bL \wtbh \,\de \tau.
\end{align*}
Since the integrand is smooth on $E_{R,\ell}$, 
we conclude that 
\begin{align*}
  \lim_{R\to 0}\int_{\partial E_{R,\ell}} \left(\bn{\otimes} \bk_\ell-  \bk_\ell{\otimes} \bn
\right) \bL\wtbh \, \de s=
\frac{b_\ell}{2\pi}\int_0^{2\pi}  \bJ \bL \wtbh(\bz_\ell) \,\de \tau 
= b_\ell \bJ \bL \wtbh(\bz_\ell),
\end{align*}
which, in view of \eqref{pklim}, establishes \eqref{pkforce}.
\end{proof}
\begin{remark}\label{rem:pksmooth}
The formula \eqref{pkforce} gives the force on 
the dislocation at $\bz_\ell$, 
and it shows that, as a function of $\bz_\ell$, 
the force $\bj_\ell$ is smooth in the interior
of $\Omega \setminus \{\bz_1,\ldots,\bz_{\ell-1},\bz_{\ell+1},\ldots,\bz_N\}$.
That is, provided $\bz_\ell$ is not colliding with another dislocation or with
$\partial \Omega$, then the force is given by a smooth function.
Of course, $\bj_\ell$ depends on the positions of \emph{all} the dislocations,
and the same reasoning applies to $\bj_\ell$ as a function of any $\bz_i$. 
\end{remark}
\begin{remark}
  We find agreement between
 \eqref{pkforce} and equation (8.18) from \cite{Gurtin},
where the force on $\bz_\ell$ 
is given by $b_\ell$ times a $\pi/2$-rotation of the 
regular part of the strain at $\bz_\ell$ (i.e., $\widetilde \bh$). 
Since we have a formula for the regular
part, we are able to write the Peach-K\"ohler force more
explicitly (in terms of the solution to
\eqref{u0Neumann}). We have also shown that assumption (A3) from 
\cite{Gurtin} holds for screw dislocations, validating 
the derivation of (8.18) in \cite{Gurtin}.
\end{remark}

\section*{Acknowledgements}
The authors warmly thank the Center for Nonlinear Analysis (NSF Grant No.
DMS-0635983), where part of this research was carried out. 
The research of T. Blass was supported by the National Science
Foundation under the PIRE Grant No. OISE-0967140. 
M. Morandotti also acknowledges support of the 
Funda\c{c}\~{a}o para a Ci\^encia e a Tecnologia 
(Portuguese Foundation for Science and Technology)
through the Carnegie Mellon Portugal Program 
under the grant FCT$\_$UTA/CMU/MAT/0005/2009.

\section{Appendix}
We present 
the proof of \eqref{gradU} along with
some necessary lemmas.
We begin by noting that 
$U (\bz_1,\ldots,\bz_N) = \widehat{U} (\bz_1,\ldots,\bz_N)+
\overline{U} (\bz_1,\ldots,\bz_N)$
where
\begin{align}
   \widehat{U}(\bz_1,\ldots,\bz_N) &= 
\int_{\Omega_\ep}W(\bh_0)\,\de\bx, \nonumber \\ 
  \overline{U}(\bz_1,\ldots,\bz_N) &= 
\sum_{i=1}^N\sum_{m\ne i}\int_{E_{\ep,m}}W(\bk_i)\,\de\bx + \sum_{m=1}^N
\sum_{i=1}^{N-1}\sum_{j=i+1}^N \int_{E_{\ep,m}}\bk_j\cdot\bL\bk_i\,\de\bx \,\, + \nonumber \\
& \qquad + \sum_{m=1}^N\int_{E_{\ep,m}} W(\nabla u_0)\,\de\bx + 
\sum_{m=1}^N\sum_{i=1}^N\int_{\partial E_{\ep,m}}u_0 \bL\bk_i\cdot \bn\, \de s,
\label{Ubar}
\end{align}
which follows from a direct calculation and integration by parts to 
eliminate the integral over $\partial\Omega$ from $U_E$.

We introduce the notation $\Dvl u$ for
the derivative of
a function $u = u(\bx;\bz_1,\ldots,\bz_N)$ 
with respect to the $\ell$-th dislocation location in the direction $\bv$,
\begin{equation*}
  \Dvl u(\bx) := \frac{\de}{\de\xi}u(\bx;\bz_1,\ldots,\bz_\ell + 
\xi\bv,\ldots,\bz_N)\Big|_{\xi=0}.
\end{equation*}

\begin{lemma}\label{lem:kdot}
The fields $\bk_i(\bx;\bz_i)$,  $u_0(\bx;\bz_1,\ldots,\bz_N)$,
and $\bh_0(\bx;\bz_1,\ldots,\bz_N)$
 are smooth with 
respect to $\bz_\ell$ for every $\ell\in\{1,\ldots,N\}$. Moreover, 
$\Dvl \bk_i(\bx) = 0$ if $\ell \ne i$,
\begin{align}
&  \Dvl \bk_\ell (\bx)  = -D \bk_\ell(\bx)\bv = 
-\nabla \left(\bk_\ell(\bx)\cdot\bv \right) \label{dvlk}\\
&\Dvl \bh_0 (\bx)=\nabla w(\bx), \quad {\rm where}\quad 
w(\bx) = \Dvl u_0(\bx) - \bk_\ell(\bx)\cdot\bv \label{dvlh}
\end{align}
\end{lemma}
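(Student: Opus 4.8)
The plan is to establish the three assertions of Lemma~\ref{lem:kdot} in order, exploiting the explicit formulas available for $\bk_i$, $u_0$, and $\bh_0$.

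First, smoothness in $\bz_\ell$ away from the singularity: the formula \eqref{k_def} for $\bk_i(\bx;\bz_i)$ is manifestly smooth in $\bz_i$ as long as $\bx\neq\bz_i$, and it is independent of $\bz_j$ for $j\neq i$, which already gives $\Dvl\bk_i=0$ for $\ell\neq i$. For $u_0$, I would use the Green's function representation \eqref{uGreen}: since $u_0(\bx;\bz_1,\ldots,\bz_N)=\int_{\partial\Omega}G(\bx,\by)\bL\sum_i\bk_i(\by;\bz_i)\cdot\bn(\by)\,\de s(\by)$ and each $\bk_i(\by;\bz_i)$ for $\by\in\partial\Omega$ is smooth in $\bz_i$ (the dislocations being at positive distance from $\partial\Omega$ since $\ep_0>0$), differentiation under the integral sign shows $u_0$ is smooth in each $\bz_\ell$. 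Then $\bh_0$ is smooth by \eqref{h0def}, being a sum of smooth pieces away from $\cZ$.

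Second, the identity \eqref{dvlk}. Here I would simply observe that $\bk_\ell(\bx;\bz_\ell)$ depends on $\bx$ and $\bz_\ell$ only through the difference $\bx-\bz_\ell$; write $\bk_\ell(\bx;\bz_\ell)=\bK(\bx-\bz_\ell)$ for the appropriate fixed vector field $\bK$. Then $\Dvl\bk_\ell(\bx)=\tfrac{\de}{\de\xi}\bK(\bx-\bz_\ell-\xi\bv)\big|_{\xi=0}=-D\bK(\bx-\bz_\ell)\bv=-D\bk_\ell(\bx)\bv$, where $D$ is the spatial Jacobian. The second equality in \eqref{dvlk}, namely $-D\bk_\ell(\bx)\bv=-\nabla(\bk_\ell(\bx)\cdot\bv)$, follows because $\bk_\ell$ is curl-free on $\RR^2\setminus\{\bz_\ell\}$ by \eqref{curlk}, so its Jacobian $D\bk_\ell$ is symmetric there, and for a symmetric-Jacobian field one has $D\bk_\ell\,\bv=\nabla(\bk_\ell\cdot\bv)$ for any constant $\bv$.

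Third, the identity \eqref{dvlh}. Using \eqref{h0def}, $\Dvl\bh_0=\Dvl\bk_\ell+\Dvl(\nabla u_0)+\sum_{i\neq\ell}\Dvl\bk_i$. The last sum vanishes by the first part, and $\Dvl(\nabla u_0)=\nabla(\Dvl u_0)$ by smoothness (commuting the $\bx$- and $\xi$-derivatives). Combining with \eqref{dvlk} gives $\Dvl\bh_0(\bx)=\nabla(\Dvl u_0(\bx))-\nabla(\bk_\ell(\bx)\cdot\bv)=\nabla w(\bx)$ with $w=\Dvl u_0-\bk_\ell\cdot\bv$, as claimed.

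The main obstacle is the smoothness of $u_0$ in the parameters $\bz_\ell$, and in particular the justification of differentiating under the integral sign in \eqref{uGreen} together with the commutation $\Dvl\nabla u_0=\nabla\Dvl u_0$; this requires knowing that the Green's function $G(\bx,\by)$ and its $\bx$-gradient interact well with the smooth $\bz_\ell$-dependence of the boundary data, which is where the hypothesis $\ep_0>0$ (dislocations bounded away from $\partial\Omega$) does the real work. Everything else is an application of the chain rule and the curl-free property \eqref{curlk}.
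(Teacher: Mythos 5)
Your proposal is correct and follows essentially the same route as the paper's own proof: translation invariance $\bk_\ell(\bx;\bz_\ell+\xi\bv)=\bk_\ell(\bx-\xi\bv;\bz_\ell)$ for \eqref{dvlk}, the curl-free property to convert $D\bk_\ell\,\bv$ into $\nabla(\bk_\ell\cdot\bv)$, and the Green's function representation \eqref{uGreen} for smoothness of $u_0$ and hence of $\bh_0$, followed by commuting $\Dvl$ with $\nabla$ to get \eqref{dvlh}. Your remarks on justifying differentiation under the integral sign only make explicit what the paper leaves implicit.
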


\begin{proof}
The form of $\bk$ in \eqref{k_def} shows that $\bk_i$ is smooth 
with respect to $\bz_\ell$ for all $i,\ell=1,\ldots, N$, and in particular that
  $\bk_i(\bx) = \bk(\bx;\bz_i)$ is independent of $\bz_\ell$ if $\ell \ne i$
so $\Dvl \bk_i =0$. That form also shows
that $\bk(\bx;\bz_\ell + \xi\bv) = \bk(\bx -\xi\bv;\bz_\ell)=\bk_\ell(\bx-\xi\bv)$ so 
that $\Dvl \bk_\ell(\bx) = -(D\bk_\ell )\bv$, where $D\bk_\ell$ 
is the derivative of $\bk_\ell$ 
with respect to $\bx$. Now because $\curl \bk_\ell = 0$, we have $D \bk_\ell(\bx)\bv = 
\nabla \left(\bk_\ell(\bx)\cdot\bv \right)$, which establishes \eqref{dvlk}.

Since $u_0$ solves the elliptic problem \eqref{u0Neumann}, it
can be   represented as in \eqref{uGreen},
in terms of the Green's function $G(\bx,\by)$. The smoothness
of $u_0$ in $\bz_\ell$ follows from the smoothness of $\bk_i$ 
for each $i,\ell =1,\ldots,N$. Hence, $\bh_0$ is smooth in $\bz_\ell$
and $\Dvl \bh_0 = \Dvl \nabla u_0 + \Dvl\bk_\ell = \nabla
(\Dvl u_0 - \bk_\ell \cdot \bv)$, which establishes \eqref{dvlh}.
\end{proof}

We will take derivatives of the energy with respect to the dislocations positions. This
will involve integrals over cores that are centered at $\bz_\ell + \xi \bv$
whose integrands are evaluated on these shifted cores or on their complements
in $\Omega$. Thus, we will need to be able to take derivatives of integrals
over sets that depend on $\xi$ and whose integrands are 
functions that depend on $\xi$. 

\begin{lemma}\label{lem:derivatives}
Let $f=f(\bx,\xi)$, $g=g(\bx,\xi)$, and $r=r(\bx,\xi)$ be defined on $E_\ep(\bx_0+\xi\bv)$, 
$\partial E_\ep(\bx_0+\xi\bv)$, and $\Omega\setminus E_\ep(\bx_0+\xi\bv)$, respectively, for $\xi$ 
a real parameter, $\ep>0$, $\bv\in\R2$. Then

\begin{align}
 \left.\frac{\de}{\de\xi}\int_{E_\ep(\bx_0+\xi\bv)} f(\bx,\xi)\,\de\bx\right|_{\xi=0} 
 &=  \int_{E_\ep(\bx_0)} D_\xi f(\bx,0)\,\de\bx \nonumber\\
& = \int_{E_\ep(\bx_0)} \partial_\xi f(\bx,0)\,\de\bx+\int_{\partial E_\ep(\bx_0)} 
f(\bx,0)\bv\cdot \bn\,\de s, \label{fder}\\
\left.\frac{\de}{\de\xi}\int_{\partial E_\ep(\bx_0+\xi\bv)} 
g(\bx,\xi)\,\de s \right|_{\xi=0} & =  \int_{\partial E_\ep(\bx_0)} 
D_\xi g(\bx,0)\,\de s,\label{gder} \\
\left.\frac{\de}{\de\xi}\int_{\Omega\setminus E_\ep(\bx_0+\xi\bv)} 
r(\bx,\xi)\,\de\bx\right|_{\xi=0} & =  
\int_{\Omega\setminus E_\ep(\bx_0)} \partial_\xi r(\bx,0)\,\de\bx-
\int_{\partial E_\ep(\bx_0)} r(\bx,0)\bv\cdot \bn\,\de s, \label{rder}
\end{align}
where $D_\xi f:=\partial_\xi f+\nabla f\cdot\bv$.
\end{lemma}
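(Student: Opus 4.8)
The plan is to reduce each integral over the moving set $E_\ep(\bx_0+\xi\bv)$ (or its complement) to an integral over a genuinely $\xi$-independent set, differentiate under the integral sign, and recover the boundary terms via the divergence theorem, using crucially that $\bv$ is a constant vector and that $E_\ep(\bx_0+\xi\bv)$ is nothing but the rigid translate $E_\ep(\bx_0)+\xi\bv$.

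For \eqref{fder}, the first step is the change of variables $\bx = \by + \xi\bv$, which turns $\int_{E_\ep(\bx_0+\xi\bv)} f(\bx,\xi)\,\de\bx$ into $\int_{E_\ep(\bx_0)} f(\by+\xi\bv,\xi)\,\de\by$, an integral over the fixed domain $E_\ep(\bx_0)$. Since $f$ is (assumed) $C^1$ up to $\overline{E}_\ep(\bx_0)$, differentiation under the integral is legitimate; at $\xi=0$ the chain rule gives the integrand $\partial_\xi f(\by,0)+\nabla f(\by,0)\cdot\bv = D_\xi f(\by,0)$, which (after renaming $\by\to\bx$) is the first line of \eqref{fder}. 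For the second line, one writes $D_\xi f = \partial_\xi f + \div(f\bv)$ — valid because $\bv$ is constant — and applies the divergence theorem on $E_\ep(\bx_0)$, whose outward unit normal is $\bn$, producing the surface term $\int_{\partial E_\ep(\bx_0)} f\,\bv\cdot\bn\,\de s$. Identity \eqref{gder} is handled the same way: parametrize $\partial E_\ep(\bx_0)$ by a curve $\gamma(t)$, $t\in[0,2\pi)$, so that $\partial E_\ep(\bx_0+\xi\bv)$ is parametrized by $\gamma(t)+\xi\bv$; the speed $|\gamma'(t)|$ is unchanged by translation, hence $\int_{\partial E_\ep(\bx_0+\xi\bv)} g\,\de s = \int_0^{2\pi} g(\gamma(t)+\xi\bv,\xi)\,|\gamma'(t)|\,\de t$ is again over a fixed set, and differentiating at $\xi=0$ gives exactly $\int_{\partial E_\ep(\bx_0)} D_\xi g(\bx,0)\,\de s$ with no extra term, since $\partial E_\ep$ is a closed curve.

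For \eqref{rder} I would write $\Omega\setminus E_\ep(\bx_0+\xi\bv)$ as $\Omega$ minus $E_\ep(\bx_0+\xi\bv)$: extending $r$ across $\partial E_\ep(\bx_0)$ (harmless, as the final formula involves only $\Omega\setminus E_\ep(\bx_0)$ and $\partial E_\ep(\bx_0)$), for small $\xi$ one has $\int_{\Omega\setminus E_\ep(\bx_0+\xi\bv)} r\,\de\bx = \int_\Omega r\,\de\bx - \int_{E_\ep(\bx_0+\xi\bv)} r\,\de\bx$. The first integral is over the fixed domain $\Omega$, so its $\xi$-derivative is $\int_\Omega \partial_\xi r(\bx,0)\,\de\bx$; the second is given by the already-proven \eqref{fder}. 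Subtracting and cancelling the $\int_{E_\ep(\bx_0)}\partial_\xi r$ term against part of $\int_\Omega\partial_\xi r$ leaves $\int_{\Omega\setminus E_\ep(\bx_0)}\partial_\xi r(\bx,0)\,\de\bx - \int_{\partial E_\ep(\bx_0)} r(\bx,0)\,\bv\cdot\bn\,\de s$, which is \eqref{rder}. Equivalently, this is the Reynolds transport identity for the time-dependent domain $\Omega\setminus E_\ep(\bx_0+\xi\bv)$, whose only moving boundary component is $\partial E_\ep$, moving with velocity $\bv$, and on which the outward normal of $\Omega\setminus E_\ep$ equals $-\bn$; this orientation is precisely what flips the sign of the transport term.

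There is no deep obstacle here: the content is the standard differentiation of integrals over moving domains. The points that require care, and which I would state explicitly, are the regularity hypotheses on $f,g,r$ that license differentiation under the integral sign (we take them $C^1$ up to the relevant boundaries, which holds in every application in this paper), the interpretation of $D_\xi$ on the moving boundary as the total $\xi$-derivative of the integrand along the motion (which equals $\partial_\xi + \nabla(\cdot)\cdot\bv$ when the integrand extends to a neighborhood), and the orientation bookkeeping in \eqref{rder}, where $\partial E_\ep$ bounds $\Omega\setminus E_\ep$ from the side opposite to $\bn$ — getting that wrong would reverse the sign of the boundary contribution.
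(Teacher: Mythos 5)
Your argument is correct and is essentially the paper's own proof: translate to the fixed domain $E_\ep(\bx_0)$ by the change of variables $\bx\mapsto\bx+\xi\bv$, differentiate under the integral sign, and use the divergence theorem for \eqref{fder} (omitted for \eqref{gder}), then handle \eqref{rder} by extending $r$ to $\Omega$ and writing the complement integral as $\int_\Omega-\int_{E_\ep(\bx_0+\xi\bv)}$. Your added remarks on regularity and orientation are consistent with what the paper leaves implicit; no changes needed.
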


\begin{proof}
  We calculate
  \begin{align*}
    \frac{\de}{\de\xi} \int_{E_\ep(\bx_0+\xi\bv)} f(\bx,\xi)\de\bx=
    \frac{\de}{\de\xi} \int_{E_\ep(\bx_0)} f(\bx+\xi\bv,\xi)\de\bx=
     \int_{E_\ep(\bx_0)} (\partial_\xi f(\bx+\xi\bv,\xi)
+\nabla f(\bx+\xi\bv,\xi)\cdot \bv) \de\bx.
  \end{align*}
If we send $\xi \to 0$ and apply the divergence theorem
we obtain \eqref{fder}. A similar calculation gives \eqref{gder}
but the divergence theorem is not applied. If $\hat{r}$ is 
a smooth extension of $r$ to $\Omega$ then
\begin{align*}
  \frac{\de}{\de\xi}\int_{\Omega\setminus E_\ep(\bx_0+\xi\bv)} r(\bx,\xi)\de\bx  &=
\frac{\de}{\de\xi}\int_{\Omega} \hat{r}(\bx,\xi)\de\bx -
\frac{\de}{\de\xi}\int_{E_\ep(\bx_0+\xi\bv)} \hat{r}(\bx,\xi)\de\bx \\ 
&=\int_{\Omega}\partial_\xi \hat{r}(\bx,\xi)\de\bx -
\int_{E_\ep(\bx_0)} \partial_\xi\hat{r}(\bx+\xi\bv,\xi)\de\bx -
\int_{\partial E_\ep(\bx_0)} \hat{r}(\bx+\xi\bv,\xi)\bv\cdot \bn\,\de s.
\end{align*}
Setting $\xi=0$ and combining the first two integrals on the right side yields
\eqref{rder}.
\end{proof}

\begin{remark}
Lemma \ref{lem:derivatives} applies 
to the vector-valued $\bk_i$.
 When applying Lemma \ref{lem:derivatives} to integrals of 
$\bk(\bx;\bz_\ell + \xi\bv)$ over $E_{\ep}(\bz_\ell + \xi\bv)$
we will get cancellations from 
\begin{equation}\label{dkzero}
   D_\xi \bk(\bx;\bz_\ell + \xi\bv)
= \partial_\xi\bk(\bx;\bz_\ell + \xi\bv)+ D\bk(\bx;\bz_\ell + \xi\bv)\bv
= \Dvl \bk_\ell(\bx) + D\bk_\ell \bv = 0.
\end{equation}
The last equality follows from \eqref{dvlk}.
\end{remark}

\begin{proof}[Proof of Equation \eqref{gradU}]
The $-\log \ep$  term in the energy is independent of the positions of
the dislocations so it vanishes upon taking the derivative of the
energy with respect to $\bz_\ell$. To calculate the derivative of $U$ 
with respect to $\bz_\ell$ will split $\nabla_{\bz_\ell}U$ into
$\nabla_{\bz_\ell}\widehat{U}+\nabla_{\bz_\ell}\overline{U}$.
To calculate $\nabla_{\bz_\ell}\widehat{U}$ we apply \eqref{rder} to get
\begin{equation}
  \nabla_{\bz_\ell}\widehat{U}= 
\Dvl \left( \int_{\Omega_\ep}W(\bh_0)\de\bx \right) =
\int_{\Omega_\ep}\Dvl\bh_0 \cdot \bL\bh_0\, \de\bx -
\int_{\partial E_{\ep,\ell}} W(\bh_0) \bv \cdot \bn\, \de s
\label{DUhat1}
\end{equation}
Using \eqref{dvlh}, ${\rm div}(\bL\bh_0)=0$ 
in $\Omega$, and
$\bL \bh_0 \cdot \bn = 0$ on $\partial \Omega$, we have
\begin{equation}\label{DUhat2}
\begin{split}
\int_{\Omega_\ep}\Dvl\bh_0 \cdot \bL\bh_0 \,\de\bx &= \int_{\Omega_\ep}\nabla (
\Dvl u_0 - \bk_\ell \cdot \bv) \cdot \bL\bh_0\, \de\bx = 
\int_{\partial \Omega_\ep}  (\Dvl u_0 - \bk_\ell \cdot \bv)   \bL\bh_0 \cdot \bn\, \de s\\
& = -\sum_{j=1}^N\int_{\partial E_{\ep,j}} (\Dvl u_0 - \bk_\ell \cdot \bv) \bL \bh_0 \cdot \bn\, \de s
= -\sum_{j=1}^N\int_{\partial E_{\ep,j}} w\bL \bh_0 \cdot \bn\, \de s
\end{split}
\end{equation}
using the notation $w = \Dvl u_0 - \bk_\ell \cdot \bv$ from \eqref{dvlh}.
Combining \eqref{DUhat1} and \eqref{DUhat2}, and adding and subtracting
$ \bh_0 \otimes (\bL\bh_0)\bn \cdot \bv$ from the integrand, we have
\begin{equation*}
\begin{split}
\nabla_{\bz_\ell}\widehat{U}&= 
 -\sum_{j=1}^N\int_{\partial E_{\ep,j}} (\Dvl u_0 - \bk_\ell \cdot \bv) \bL \bh_0 \cdot \bn\, \de s
-\int_{\partial E_{\ep,\ell}} W(\bh_0) \bv \cdot \bn\, \de s \\
&= -\int_{\partial E_{\ep,\ell}} \left\{ W(\bh_0){\bf I} - \bh_0
  \otimes (\bL\bh_0) \right\}\bn \cdot \bv \, \de s
-\sum_{j \ne \ell}   \int_{\partial E_{\ep,j}} (\Dvl u_0 - \bk_\ell \cdot \bv) \bL \bh_0 \cdot \bn \, \de s \,\,+\\
&\qquad \qquad \qquad - \int_{\partial E_{\ep,\ell}} [(\Dvl u_0 - \bk_\ell \cdot \bv) \bL \bh_0 \cdot \bn  + 
\bh_0  \otimes (\bL\bh_0) \bn \cdot \bv] \, \de s;
\end{split}
\end{equation*}
also, $\bh_0  \otimes (\bL\bh_0) \bn \cdot \bv = 
(\bh_0 \cdot \bv) (\bL\bh_0 \cdot \bn )= 
(\nabla u_0 \cdot \bv+\sum_{i=1}^N \bk_i \cdot \bv) (\bL\bh_0 \cdot \bn)$, so 
\begin{equation*}
\begin{split}
  (\Dvl u_0 - \bk_\ell \cdot \bv) \bL \bh_0 \cdot \bn   + 
\bh_0  \otimes (\bL\bh_0) \bn \cdot \bv  
&=  \left(\Dvl u_0 - \bk_\ell \cdot \bv  + 
\nabla u_0 \cdot \bv+\sum_{i=1}^N \bk_i \cdot \bv \right)  \bL\bh_0 \cdot \bn \\
& = \left(D_\xi u_0 + \sum_{i\ne \ell}\bk_i \cdot \bv \right) \bL  \bh_0 \cdot \bn
\end{split}
\end{equation*}
where $D_\xi u_0 = \Dvl u_0 +\nabla u_0 \cdot \bv$. Hence,
\begin{equation}
\begin{split}
\nabla_{\bz_\ell}\widehat{U}&= 
-\int_{\partial E_{\ep,\ell}} \left\{ W(\bh_0){\bf I} - \bh_0  \otimes 
(\bL\bh_0) \right\}\bn \cdot \bv \, \de s\\
& -\sum_{j \ne \ell}   \int_{\partial E_{\ep,j}} (\Dvl u_0 - \bk_\ell \cdot \bv) \bL \bh_0 \cdot \bn \, \de s
- \int_{\partial E_{\ep,\ell}}\left(D_\xi u_0 + \sum_{i\ne \ell}\bk_i \cdot \bv \right) \bL  \bh_0 \cdot \bn\, \de s
  \label{DUhat}
\end{split}
\end{equation}

We calculate $\nabla_{\bz_\ell}\overline{U}$ in several steps. 
We split the first sum in the right side of \eqref{Ubar} 
into the integral over $E_{\ep,\ell}$ and the rest of the terms
\begin{equation}\label{split1}
 \sum_{i=1}^N\sum_{m\ne i}\int_{E_{\ep,m}}W(\bk_i)\de\bx
= \int_{E_{\ep,\ell}}  \sum_{m\ne \ell}  W(\bk_m)\de\bx
+\sum_{m\ne \ell} \int_{E_{\ep,m}} \sum_{i\ne m}  W(\bk_i)\de\bx.
\end{equation}
In the first of these, each $\bk_m$ 
 does not vary as $\bz_\ell \to \bz_\ell
+\xi\bv$ because $m\ne \ell$.
 Hence we apply \eqref{fder} directly with
$D_\xi \bk_m = \partial_\xi \bk_m + \Dvl \bk_m
 = \nabla (\bk_m \cdot \bv)$ because $\partial_\xi \bk_m=0$. We have
\begin{equation}
\begin{split}
\Dvl \left(\sum_{m\ne \ell} \int_{E_{\ep,\ell}} W(\bk_m)\,\de\bx \right)
&= \sum_{m\ne \ell} \int_{E_{\ep,\ell}} D_\xi \bk_m \cdot \bL \bk_m\,\de\bx 
 = \sum_{m\ne \ell} \int_{E_{\ep,\ell}}\nabla(\bk_m\cdot \bv) \cdot \bL \bk_m\, \de\bx  \\
& = \int_{\partial E_{\ep,\ell}}   \sum_{m\ne \ell}   (\bk_m \cdot \bv) \bL \bk_m\cdot \bn\, \de s,
\label{term1}
\end{split}
\end{equation}
where we used ${\rm div}(\bL\bk_m)=0$.

The second term from \eqref{split1} involves integrals over $E_{\ep,m}$ 
for $m\ne \ell$, so these domains do not move as $\bz_\ell \to \bz_\ell +\xi\bv$.
Also, the terms $W(\bk_i)$ for $i\neq \ell$ vanish when we apply $\Dvl$, so
\begin{equation}
\begin{split}
\Dvl \left(\sum_{m\ne \ell} \int_{E_{\ep,m}} \sum_{i\ne m}  W(\bk_i)\dx\right)
&= \sum_{m\ne \ell} \int_{E_{\ep,m}} \Dvl \bk_\ell \cdot \bL \bk_\ell\, \de\bx 
 = \sum_{m\ne \ell} \int_{E_{\ep,m}} - \nabla(\bk_\ell\cdot \bv) \cdot \bL \bk_\ell\, \de\bx  \\
& = -\sum_{m\ne \ell} \int_{\partial E_{\ep,m}} (\bk_\ell \cdot \bv) \bL \bk_\ell\cdot \bn\, \de s,
\label{term5}
\end{split}
\end{equation}
where we used \eqref{dvlk} and  ${\rm div}(\bL\bk_\ell)=0$.

The second sum from \eqref{Ubar} is split into 
the integral over $E_{\ep,\ell}$ and the rest of the terms
\begin{equation}
  \label{split2}
   \sum_{m=1}^N\sum_{i=1}^{N-1}\sum_{j=i+1}^N \int_{E_{\ep,m}}\bk_j\cdot\bL\bk_i\,\de\bx
= \int_{E_{\ep,\ell}}  \sum_{i<j}  \bk_j\cdot\bL\bk_i\,\de\bx + 
\sum_{m\ne \ell} \int_{E_{\ep,m}}  \sum_{i<j}  \bk_j\cdot\bL\bk_i\,\de\bx .
\end{equation}
Applying \eqref{fder} to the first term on the right side yields
\begin{equation}
\begin{split}
 \Dvl \left( \int_{E_{\ep,\ell}}  \sum_{i<j}  \bk_j\cdot\bL\bk_i\,\de\bx\right) & = 
\int_{E_{\ep,\ell}}  \sum_{i<j}  D_\xi\bk_j\cdot\bL\bk_i +  D_\xi \bk_i\cdot\bL\bk_j \,\de\bx  \\
& = \int_{E_{\ep,\ell}}  \sum_{i,j\neq \ell, i<j}  \nabla (\bk_j\cdot \bv)\cdot\bL\bk_i +  
\nabla( \bk_i\cdot \bv)\cdot\bL\bk_j \,\de\bx  \\
& = \sum_{i\neq \ell}\sum_{j\neq i} \int_{E_{\ep,\ell}}   \nabla (\bk_j\cdot \bv)\cdot\bL\bk_i\,\de\bx
 = \sum_{i\neq \ell}\sum_{j\neq i} \int_{\partial E_{\ep,\ell}} (\bk_j\cdot \bv)\cdot\bL\bk_i \cdot \bn\, \de s
\label{term2}
\end{split}
\end{equation}
Between the first and second lines we used $D_\xi \bk_i = \nabla (\bk_i \cdot \bv)$ for $i\ne \ell$
and $D_\xi \bk_\ell = 0$ by \eqref{dkzero},
and in the third line we used ${\rm div}(\bL \bk_i)=0$.

For the second sum of \eqref{split2}, using \eqref{dvlk} from  Lemma \ref{lem:kdot},  we have
\begin{equation}
\begin{split}
\Dvl \left( \sum_{m\ne \ell} \int_{E_{\ep,m}}  \sum_{i<j}  \bk_j\cdot\bL\bk_i\,\de\bx\right)&=
 \sum_{m\ne \ell} \sum_{i\ne \ell} \int_{E_{\ep,m}} \Dvl \bk_\ell \cdot \bL \bk_i \,\de\bx
= -\sum_{m\ne \ell} \sum_{i\ne \ell} \int_{E_{\ep,m}} \nabla( \bk_\ell\cdot \bv) \cdot \bL \bk_i \,\de\bx \\
&= -\sum_{m\ne \ell} \sum_{i\ne \ell} \int_{\partial E_{\ep,m}}(\bk_\ell \cdot\bv )\bL \bk_i \cdot \bn \, \de s
  \label{term6}
\end{split}
\end{equation}
The third term comprising $\overline{U}$ in \eqref{Ubar} is split as
\begin{equation}
  \label{split3}
\sum_{m=1}^N  \int_{E_{\ep,m}} W(\nabla u_0)\,\de\bx = 
\int_{E_{\ep,\ell}} W(\nabla u_0)\,\de\bx + \sum_{m\ne \ell} \int_{E_{\ep,m}} W(\nabla u_0)\,\de\bx.
\end{equation}
To calculate the derivative of the first term on the right side of 
\eqref{split3}, we use \eqref{fder},
but integrate the $D_\xi$ term by parts directly. 
Using $D_\xi u_0 = \Dvl u_0 + \nabla u_0 \cdot \bv$ and ${\rm div} (\bL \nabla u_0)=0$
we have
\begin{equation}
\begin{split}
  \Dvl \left(\int_{E_{\ep,\ell}} W(\nabla u_0)\,\de\bx\right) &= 
\int_{E_{\ep,\ell}}  \nabla (D_\xi u_0)  \bL\nabla u_0 \,\de\bx 
= \int_{\partial E_{\ep,\ell}}  (D_\xi u_0)  \bL\nabla u_0 \cdot \bn\, \de s \\
&= \int_{\partial E_{\ep,\ell}}  (\Dvl u_0 + \nabla u_0 \cdot \bv)  \bL\nabla u_0 \cdot \bn\, \de s .
 \label{term3}
\end{split}
\end{equation}
Calculating the derivative of the second term on the right side of \eqref{split3}
is almost the same as in \eqref{term3} except the domains $E_{\ep,m}$ do not 
depend on $\bz_\ell$ because $m\ne \ell$. Hence
\begin{equation}
  \label{term7}
  \Dvl \left( \sum_{m\ne \ell} \int_{E_{\ep,m}} W(\nabla u_0)\,\de\bx\right) = 
\sum_{m\ne \ell} \int_{E_{\ep,m}} \nabla (\Dvl u_0)\cdot \bL\nabla u_0\,\de\bx
=\sum_{m\ne \ell} \int_{\partial E_{\ep,m}} \Dvl u_0\cdot \bL\nabla u_0\cdot \bn \, \de s.
\end{equation}
Turning to the the final term in \eqref{Ubar}, which we split as
\begin{equation}
  \label{split4}
  \sum_{m=1}^N\sum_{i=1}^N\int_{\partial E_{\ep,m}}u_0 \bL\bk_i\cdot \bn\, \de s=
 \sum_{i=1}^N  \int_{\partial E_{\ep,\ell}}u_0  \bL\bk_i\cdot \bn\, \de s +
\sum_{m\ne \ell}  \int_{\partial E_{\ep,m}}  \sum_{i=1}^N  u_0 \bL\bk_i\cdot \bn\, \de s,
\end{equation}
we calculate the derivative of the first term using \eqref{gder} to get
\begin{equation}
  \label{term4a}
  \Dvl \left(  \sum_{i=1}^N   \int_{\partial E_{\ep,\ell}} u_0 \bL\bk_i\cdot \bn\, \de s\right)=
\sum_{i=1}^N   \int_{\partial E_{\ep,\ell}} (D_\xi u_0) \bL\bk_i\cdot \bn\, \de s
+\sum_{i=1}^N   \int_{\partial E_{\ep,\ell}} u_0 \bL (D_\xi \bk_i)\cdot \bn\, \de s.
\end{equation}
From \eqref{dkzero} we have $D_\xi \bk_\ell = 0$ and from \eqref{dvlk} we have
$D_\xi \bk_i = \nabla (\bk_i \cdot \bv)$ for $i\ne \ell$. Hence, for $i \ne \ell$ we have
\begin{equation}
\begin{split}
 \int_{\partial E_{\ep,\ell}} u_0 \bL (D_\xi \bk_i)\cdot \bn\, \de s & = 
\int_{\partial E_{\ep,\ell}} u_0 \bL \nabla (\bk_i\cdot \bv)\cdot \bn\, \de s
= \int_{ E_{\ep,\ell}} {\rm div}\left( u_0 \bL \nabla (\bk_i\cdot \bv) \right)\,\de\bx\\
& = \int_{ E_{\ep,\ell}} \nabla u_0 \cdot \bL \nabla (\bk_i\cdot \bv)\, \de\bx+
\int_{ E_{\ep,\ell}} u_0 \, {\rm div}\left( \bL \nabla (\bk_i\cdot \bv) \right)\,\de\bx \\
& = \int_{ E_{\ep,\ell}} \nabla (\bk_i\cdot \bv) \cdot \bL \nabla u_0\,\de\bx
=\int_{ E_{\ep,\ell}} (\bk_i\cdot \bv) \cdot \bL \nabla u_0\cdot \bn\, \de s.
  \label{term4b}
\end{split}
\end{equation}
We used $ {\rm div}\left( \bL \nabla (\bk_i\cdot \bv) \right)=(\bv \cdot \nabla)(
{\rm div}(\bL \bk_i))=0$, which follows from $\curl \bk_i = 0$ and ${\rm div}(\bL \bk_i)=0$.
Combining \eqref{term4a} and \eqref{term4b} we get
\begin{equation}
  \label{term4}
  \Dvl \left(  \sum_{i=1}^N   \int_{\partial E_{\ep,\ell}} u_0 \bL\bk_i\cdot \bn\, \de s\right)=
\sum_{i=1}^N   \int_{\partial E_{\ep,\ell}} (D_\xi u_0) \bL\bk_i\cdot \bn\, \de s
+\sum_{i\ne \ell}  \int_{\partial E_{\ep,\ell}} (\bk_i \cdot \bv) \bL  \nabla  u_0  \cdot \bn\, \de s
\end{equation}
Finally, the derivative of the second term in \eqref{split4} is calculated similarly to the
first, but is simpler because the domains of integration are independent of $\bz_\ell$. Hence,
\begin{equation}
  \Dvl \left(  \sum_{m\ne \ell}  \int_{\partial E_{\ep,m}}  \sum_{i=1}^N  u_0 \bL\bk_i\cdot \bn \,\de s \right)
=  \sum_{m\ne \ell}  \int_{\partial E_{\ep,m}}  \!\!\! u_0 \bL (\Dvl \bk_\ell) \cdot \bn \,\de s+
 \sum_{m\ne \ell}   \sum_{i=1}^N  \int_{\partial E_{\ep,m}} \!\!\!  (\Dvl u_0) \bL\bk_i\cdot \bn\, \de s
  \label{term8a}
\end{equation}
because $\Dvl \bk_i = 0$ when $i\ne \ell$. 
Using $ {\rm div}\left( \bL \nabla (\bk_i\cdot \bv) \right) =0 $, as we did
to get \eqref{term4b}, we have
\begin{equation}
\begin{split}
  \int_{\partial E_{\ep,m}} u_0 \bL (\Dvl \bk_\ell) \cdot \bn\, \de s &= 
-\int_{\partial E_{\ep,m}} u_0 \bL \nabla( \bk_\ell\cdot \bv) \cdot \bn\,\de s = 
-\int_{E_{\ep,m}} {\rm div}(u_0 \bL \nabla( \bk_\ell\cdot \bv))\, \de\bx\\
&=-\int_{E_{\ep,m}}  \nabla u_0 \bL \nabla( \bk_\ell\cdot \bv)\, \de\bx
=-\int_{\partial E_{\ep,m}}  ( \bk_\ell\cdot \bv) \bL\nabla u_0 \cdot \bn\, \de s 
\label{term8b}
  \end{split}
\end{equation}
Then \eqref{term8a} and \eqref{term8b} give
\begin{equation}
  \label{term8}
  \Dvl \left(  \sum_{m=1}^N\sum_{i=1}^N\int_{\partial E_{\ep,m}}u_0 \bL\bk_i\cdot \bn\, \de s \right)=
\sum_{m\ne \ell} \int_{\partial E_{\ep,m}} \left(\sum_{i=1}^N \Dvl u_0 \cdot \bL \bk_i \cdot  \bn
-(\bk_\ell \cdot \bv) \bL \nabla u_0 \cdot \bn \right)\de s
\end{equation}
Combining \eqref{term1}, \eqref{term2}, \eqref{term3}, and \eqref{term4} we have
\begin{equation}
\begin{split}
\int_{\partial E_{\ep,\ell}}  &\left\{ \sum_{i\ne \ell}   (\bk_i \cdot \bv) \bL \bk_i\cdot \bn + 
\sum_{i\neq \ell}\sum_{j\neq i}  (\bk_j\cdot \bv)\cdot\bL\bk_i \cdot \bn \,\,+ \right. \\
 & \left.  + \,\, (D_\xi u_0)  \bL\nabla u_0 \cdot \bn +
\sum_{i=1}^N   (D_\xi u_0) \bL\bk_i\cdot \bn
+\sum_{i\ne \ell}  (\bk_i \cdot \bv) \bL  \nabla  u_0  \cdot \bn \right\}\de s \\
& = \int_{\partial E_{\ep,\ell}}  \left\{
 \sum_{i\ne \ell}   (\bk_i \cdot \bv)\left(\bL \nabla u_0 + \sum_{j=1}^N\bL\bk_j \right) 
+  D_\xi u_0 \left( \bL \nabla u_0 + \sum_{j=1}^N\bL \bk_j \right) \right\}\cdot \bn\,\de s\\
&=  \int_{\partial E_{\ep,\ell}} \left( D_\xi u_0 + \sum_{i\ne \ell}    \bk_i \cdot \bv \right) \bL \bh_0 \cdot 
\bn\, \de s
  \label{comb1}
\end{split}
\end{equation}
Combining \eqref{term5}, \eqref{term6}, \eqref{term7}, and \eqref{term8} we have
\begin{align}
\sum_{m\ne \ell} \int_{\partial E_{\ep,m}} & \left\{
-(\bk_\ell \cdot \bv) \bL \bk_\ell 
- \sum_{i\ne \ell}  (\bk_\ell \cdot\bv )\bL \bk_i + 
   \Dvl u_0 \cdot \bL \nabla u_0 
 + \sum_{i=1}^N \Dvl u_0 \cdot \bL \bk_i  
-(\bk_\ell \cdot \bv) \bL \nabla u_0
\right\} \cdot \bn \,\de s \nonumber \\
  & =\sum_{m\ne \ell} \int_{\partial E_{\ep,m}} 
\left( \Dvl u_0 - \bk_\ell \cdot \bv\right) \bL \bh_0 \cdot \bn
\,\de s. \label{comb2}
\end{align}
Thus, \eqref{DUhat}, \eqref{comb1}, and \eqref{comb2} together give
\begin{equation*}
D_{\bz_\ell}U (\bv)=
  \nabla_{\bz_\ell} U \cdot \bv
= \left(\nabla_{\bz_\ell} \widehat{U} + \nabla_{\bz_\ell} \overline{U}\right)\cdot \bv = 
-\int_{\partial E_{\ep,\ell}} \left\{ W(\bh_0){\bf I} - \bh_0 
 \otimes (\bL\bh_0) \right\}\bn \, \de s \cdot \bv,
\end{equation*}
which establishes \eqref{gradU}.
\end{proof}

\noindent {\bf Acknowledgments.} 
The authors warmly thank the Center for Nonlinear Analysis (NSF Grant No.
DMS-0635983), where part of this research was carried out. 
The research of T.\@ Blass was supported by the National Science
Foundation under the PIRE Grant No.\@ OISE-0967140. 
M.\@ Morandotti also acknowledges support of the 
Funda\c{c}\~{a}o para a Ci\^encia e a Tecnologia 
(Portuguese Foundation for Science and Technology)
through the Carnegie Mellon Portugal Program 
under the grant FCT$\_$UTA/CMU/MAT/0005/2009.


\bibliographystyle{plain}
\bibliography{disloc}

\end{document}